\providecommand{\customgenericname}{}
\newcommand{\newcustomtheorem}[2]{%
	\newenvironment{#1}[1]
	{%
		\renewcommand\customgenericname{#2}%
		\renewcommand\theinnercustomgeneric{##1}%
		\innercustomgeneric
	}
	{\endinnercustomgeneric}
}
\newtheorem{theorem}{Theorem}[section]
\newtheorem{lemma}[theorem]{Lemma}
\newtheorem{proposition}[theorem]{Proposition}
\newtheorem{corollary}[theorem]{Corollary}
\theoremstyle{definition}
\newtheorem{definition}[theorem]{Definition}
\newtheorem{question}[theorem]{Question}
\newtheorem{remark}[theorem]{Remark}
\newtheorem{example}[theorem]{Example}
\newtheorem{examples}[theorem]{Examples}
\DeclareMathOperator{\cha}{char}
\DeclareMathOperator{\GL}{GL}
\DeclareMathOperator{\RUCB}{RUCB}
\DeclareMathOperator{\supp}{supp}
\DeclareMathOperator{\Bin}{Bin}
\DeclareMathOperator{\U}{U}
\DeclareMathOperator{\Aut}{Aut}
\DeclareMathOperator{\graph}{graph}
\DeclareMathOperator{\Sym}{Sym}
\DeclareMathOperator{\IET}{IET}
\DeclareMathOperator{\A}{C}
\renewcommand{\epsilon}{\varepsilon}
\renewcommand{\phi}{\varphi}
\def\moverlay{\mathpalette\mov@rlay}
\def\mov@rlay#1#2{\leavevmode\vtop{%
		\baselineskip\z@skip \lineskiplimit-\maxdimen
		\ialign{\hfil$\m@th#1##$\hfil\cr#2\crcr}}}
\newcommand{\charfusion}[3][\mathord]{
	#1{\ifx#1\mathop\vphantom{#2}\fi
		\mathpalette\mov@rlay{#2\cr#3}
	}
	\ifx#1\mathop\expandafter\displaylimits\fi}
\newcommand{\cupdot}{\charfusion[\mathbin]{\cup}{\cdot}}
\begin{document}

\title[Amenability, Kesten's property, and measurable lamplighters]{Amenable equivalence relations, Kesten's property, and measurable lamplighters}

\author{Maksym Chaudkhari}
\address{M.C., Department of Mathematics and Statistics, University of South Florida, 4202 E Fowler Ave, CMC 342, Tampa, FL 33620, USA}
\email{mc637@usf.edu, maksymchaudkhari@utexas.edu}
\author{Kate Juschenko}
\address{K.J., Department of Mathematics, University of Texas at Austin, 2515 Speedway C1200, Austin, TX 78712, USA}
\email{katie.juschenko@gmail.com}
\author{Friedrich Martin Schneider}
\address{F.M.~Schneider, Institute of Discrete Mathematics and Algebra, TU Bergakademie Freiberg, 09596 Freiberg, Germany}
\email{martin.schneider@math.tu-freiberg.de}
    
\begin{abstract} We prove a characterization of the amenability of countable Borel equivalence relations in terms of the uniform Liouville property for group actions on their classes. Furthermore, inspired by a well-known amenability criterion for locally compact groups due to Kesten, we study return probabilities for random walks, and in particular a limiting condition that we call \emph{Kesten's property}, on general topological groups. We show that every amenable topological group with small invariant neighborhoods indeed has Kesten's property. For measurable lamplighter groups associated with countable Borel equivalence relations, we establish a connection between Kesten's property and anti-concentration inequalities for the inverted orbits of random walks on the equivalence classes. This allows us to construct an amenable contractible Polish group without Kesten's property. \end{abstract}

\maketitle

\tableofcontents

\newpage
	
\section{Introduction}

This paper develops connections between the amenability of topological groups and certain probabilistic properties of their countable  
subgroups. At the center of our study are full groups of countable Borel equivalence relations, along with their discrete counterparts. Among the properties under consideration are the \emph{Liouville property} and the growth of \emph{inverted orbits}. We briefly recall the relevant definitions and results for the Liouville property below, while inverted orbits and the related notion of extensive amenability are discussed in detail in Section~\ref{section:extensive.amenability}.

Let $G$ be a countable group acting on a set $X$. A probability measure on $G$ is called \emph{non-degenerate} if its support generates $G$ as a semigroup. Let $\mu$ be some symmetric non-degenerate probability measure on $G$. A function  $f \colon X \rightarrow \mathbb{R}$ is called \emph{$\mu$-harmonic} if the  equality \begin{displaymath}
    f(x) \, = \, \sum\nolimits_{g \in G} f(gx)\mu(g)
\end{displaymath} holds for every $x \in X$. The action is called \emph{$\mu$-Liouville} if every bounded $\mu$-harmonic function on $X$ is constant. We will say that the action $G \curvearrowright X$ is \emph{Liouville} if it is $\mu$-Liouville for some symmetric non-degenerate probability measure $\mu$ on $G$.
	
A classical theorem of Kaimanovich and Vershik from~\cite{KV} states that a discrete group $G$ is amenable if and only if the left multiplication action of $G$ on itself is Liouville. A version of this theorem valid for locally compact second countable groups was obtained by Rosenblatt in~\cite{Rt}. Moreover,  recently this result was extended to general second-countable topological groups in~\cite{ST}.
 
It is easy to see that if the left multiplication action of $G$ on itself is $\mu$-Liouville, then any transitive action of $G$ is also $\mu$-Liouville. Thus, the Kaimanovich--Vershik theorem implies that non-amenability of a group may be proved by constructing a non-Liouville transitive action or a family of actions that do not admit a common non-degenerate symmetric probability measure on $G$ making all of them Liouville. This idea was used by Kaimanovich, \cite{Kaimanovich}, as a suggested approach to show non-amenability of Thompson's group $F$. In particular, Kaimanovich showed that, for every finitely supported non-degenerate measure $\mu$ on Thompson's
group $F$, the action of $F$ on dyadic rationals is not $\mu$-Liouville. In \cite{JZ}, Zheng and the second author showed that this action is Liouville. Moreover, \cite{J23} raised the question whether for any natural number $n$ the action of Thompson's group $F$ on sets of dyadic rationals of cardinality $n$ is Liouville. This question was solved using topological group theory in~\cite{ST}.

In this paper, we prove an analogue of the Kaimanovich--Vershik theorem for orbit equivalence relations that also partially answers the following natural question.

\begin{question}\label{Liouville} Let $G$ be a countable group acting on a standard Borel space $X$, equipped with a non-atomic Borel probability measure $\mu$,  in a Borel way such that $\mu$ is quasi-invariant, and the induced orbit equivalence relation is $\mu$-amenable. Under natural additional assumptions on $G$ and $\mu$, does there exist a non-degenerate symmetric probability measure $\nu$ on $G$ such that action of $G$ on almost every orbit in $X$ is $\nu$-Liouville? \end{question}

\begin{customthm}{A}[Theorem~\ref{UnifLiouville}]\label{theorem:a} Let $X$ be a standard Borel space equipped with a non-atomic Borel probability measure $\mu$, let $R$ be a countable Borel equivalence relation on $X$ such that $\mu$ is $R$-quasi-invariant, and let $G$ be a countable dense subgroup of the full group $[R]$ equipped with the uniform topology. The following are equivalent. \begin{enumerate}
	\item[$(1)$] $R$ is $\mu$-amenable.
	\item[$(2)$] There exists a symmetric non-degenerate measure $\nu$ on $G$ such that the action of $G$ on almost every orbit in $X$ is $\nu$-Liouville.
\end{enumerate} \end{customthm}

The assumption that $G$ is dense in the full group of the corresponding orbit equivalence relation is satisfied, in particular, for the natural actions of the topological full groups on the Cantor space. This class of groups contains prominent examples of groups with open amenability problem, and in particular, a large family of groups of interval exchange transformations acting on the unit circle, see~\cite{JMBMdlS}. As we mentioned above, the classical Kaimanovich--Vershik theorem and its corollaries imply that the uniform Liouville property for a family of actions as stated in Question~\ref{Liouville} and in Theorem~\ref{theorem:a}(2) is a natural property to explore for a group with open amenability problem. Indeed, if a countable group $G$ admits an action on a standard probability space that fails condition in Theorem~\ref{theorem:a}(2), then it cannot be amenable by the Kaimanovich--Vershik theorem.

Moreover, a positive answer to Question~\ref{Liouville} should be expected only under additional assumptions on the group $G$ or on the measure $\mu$ (for example, one could consider only invariant measures). Indeed, a well-known result of Zimmer~\cite{Zimmer} shows that the action of the free group on its Poisson boundary corresponding to the simple random walk induces an amenable equivalence relation. However, since this action is essentially free, it cannot be Liouville on almost every orbit. 

Finally, as a corollary of Theorem~\ref{theorem:a}, we construct a family of group actions of a non-amenable group $G$ such that each action in this family is Liouville (and even $n$-Liouville for all $n \geq 1$, in the sense of~\cite{J23}), but there is no symmetric non-degenerate measure $\nu$ on $G$ that can make all of these actions $\nu$-Liouville, see Corollary~\ref{cor3}.

In a different context, amenability of the equivalence relation is also connected to the Liouville property of the equivalence relation by a result of Buehler and Kaimanovich~\cite{BK}. However, they construct random environments which have the Liouville property but may not be space-homogeneous, hence the setting and the result is different from the case considered in our article.

The second main theme of the article is the study of the generalizations of Kesten's theorem to amenable topological groups and the implications of these results for the behavior of countable subgroups of amenable topological groups. In particular, we will connect Kesten's theorem for a class of topological groups with extensive amenability of the actions of countable groups on the orbits of an amenable equivalence relation.

Our main positive result in the direction of generalizing Kesten's theorem is the following theorem for amenable topological groups with \emph{small invariant neighborhoods}~(\emph{SIN}), see Section~\ref{Preliminaries} for definitions.

\begin{customthm}{B}[Theorem~\ref{SINKesten}] Let $G$ be an amenable topological group with SIN and let $\nu$ be a symmetric regular Borel probability measure on $G$ with countable support. Then, for any open identity neighborhood $U$ in $G$, \begin{displaymath}
    \limsup\nolimits_{n \rightarrow \infty}\nu^{n}(U^n)^{1/n} \, = \, 1.
\end{displaymath} \end{customthm}

This theorem may be viewed as a combinatorial extension of Kesten's theorem to a class of non-locally compact topological groups. It may also be interpreted as a statement about the rate of escape for random walks in topological groups. Moreover, we provide examples of amenable topological groups with small invariant neighborhoods illustrating that stronger versions of this statement fail. 

Furthermore, the main results of Section~\ref{section:lamplighters}---Theorem~\ref{Top.Lamplighter}, Theorem~\ref{inv.orb.T}, and Corollary~\ref{counterexample_K}---allow us to construct a counterexample to the generalized version of Kesten's theorem in the most natural non-SIN case, where the SIN property is replaced by the local generation assumption. These results also relate the generalization of Kesten's theorem for topological groups with behavior of inverted orbits of random walks in the discrete setting. The counterexample, described in Corollary \ref{counterexample_K}, belongs to a family of amenable topological groups that we will call \emph{measurable lamplighter groups}. We note that we do not know if the extension of Kesten's theorem holds for several other representatives of this family. Moreover, we expect a positive answer for them, and it could help us to understand the behavior of inverted orbits of points for the random walks on the orbits of an ergodic amenable probability measure preserving equivalence relation. We will briefly describe the main examples in the next paragraph and we refer the reader to Section~\ref{section:lamplighters} for more details.

Let $R$ be a countable Borel equivalence relation on a standard Borel space $X$ with a non-atomic probability measure $\mu$ such that $\mu$ is $R$-quasi-invariant. Then $R$ carries the Borel structure inherited from $X \times X$ as well as a measure $\mu_{R}$ induced by $\mu$. Let us denote by $\A_{\mu}(R)$ the group of all $\mu_{R}$-equivalence classes of Borel subsets of $R$ of finite $\mu_{R}$ measure, with the group operation defined by the symmetric difference of sets. This group comes equipped with a natural distance, which generates what we call the \emph{$L^{1}$-topology} on $\A_{\mu}(R)$, and the full group $[R]$ acts on $\A_{\mu}(R)$ by isometries. The corresponding semidirect product $\A_{\mu}(R)\rtimes[R]$ may be viewed as an analogue of the lamplighter group obtained from an action of a discrete group on a countable set. We prove the following.

\begin{customthm}{C}[Theorem~\ref{Top.Lamplighter}] Let $R$ be a countable Borel equivalence relation on a standard Borel space $X$ with a non-atomic Borel probability measure $\mu$. Let $[R]$ be endowed with the uniform topology and $\A_{\mu}(R)$ with the $L^{1}$-topology. Then $\A_{\mu}(R) \rtimes [R]$ with the product topology is a Polish topological group. Moreover, the following hold. \begin{enumerate}
	\item[$(1)$] If $\mu$ is $R$-invariant, then $\A_{\mu}(R) \rtimes [R]$ admits a complete left-invariant metric compatible with its topology.
    \item[$(2)$] If $R$ is $\mu$-amenable, then the topological group $\A_{\mu}(R) \rtimes[R]$ is amenable.
    \item[$(3)$] If $\mu$ is $R$-ergodic, then $\A_{\mu}(R) \rtimes[R]$ is contractible and does not have SIN.
\end{enumerate} \end{customthm} 

The importance of semidirect products for amenability was realized in~\cite{JM}, \cite{JNdlS}. The authors of these articles showed that under certain conditions amenability of an action of a lamplighter group implies amenability of the background group. In this paper we lift these results to topological setting. In particular, if Kesten's theorem  holds for $\A_{\mu}(R) \rtimes[R]$, our Theorem~\ref{inv.orb.T} allows us to obtain an anti-concentration inequality for the average size of an inverted orbit of a point for the random walks on the orbits of $R$. The study of inverted orbits is closely related to the growth of groups and to the notion of extensive amenability of group actions.

Extensive amenability was first formally defined in~\cite{JMBMdlS} but was used implicitly in~\cite{JM} and in~\cite{JNdlS}, where it enabled several breakthroughs in the study of amenability of discrete groups. Extensive amenability of an action admits a characterization in terms of the anti-concentration inequality for the sizes of inverted orbits (see~\cite[Proposition~4.1]{JMBMdlS}). A major open problem of amenability of the group of interval exchange transformations was also reduced to verifying extensive amenability of its action on the unit circle in~\cite{JMBMdlS}. Our Theorem~\ref{inv.orb.T} may be viewed as a step towards establishing extensive amenability of the action of the IET group on the unit circle.

Finally, the connection between topological versions of Kesten's theorem and the behavior of inverted orbits is precisely the tool we need to provide an example of an amenable contractible Polish topological group failing the generalization of Kesten's theorem. The example we construct in Corollary~\ref{counterexample_K} is also a measurable lamplighter over an amenable but not measure preserving equivalence relation.

\subsection{Organization} The structure of this article is as follows. Section~\ref{Preliminaries} contains a brief overview of the background material on topological groups and countable Borel equivalence relations. In Section~\ref{ULiouville} we prove our main results concerning the Liouville property, Theorem~\ref{UnifLiouville} and Corollary~\ref{cor3}. Section~\ref{TopKesten} contains the proof of Theorem~\ref{SINKesten} and the discussion on possible extensions of Kesten's theorem to topological groups. The subsequent Section~\ref{section:extensive.amenability} provides some relevant background on extensive amenability and inverted orbits. In Section~\ref{section:lamplighters} we discuss the measurable analogues of lamplighter groups and possible applications of Kesten's theorem to them. Moreover, we present several questions related to extensive amenability of group actions and inverted orbits in Section~\ref{section:extensive.amenability} and Section~\ref{section:lamplighters}.	
	
\section{Preliminaries}\label{Preliminaries}

\subsection{Amenability}

In this preliminary subsection, we briefly recall the definition of and some basic facts related to the notions of amenability for topological groups.

Let $G$ be a topological group.\footnote{In this paper we always work under the assumption that a topological group is Hausdorff.} Let $\mathcal{U}(G)$ denote the neighborhood filter of the neutral element in $G$. Then $G$ is said to have \emph{small invariant neighborhoods} (\emph{SIN}) if $\mathcal{U}(G)$ admits a filter base consisting of conjugation-invariant sets. The class of topological groups with SIN contains all groups with a topology generated by a bi-invariant metric. In particular, any discrete group has SIN. However, for locally compact groups this property is more restrictive; for instance, a connected locally compact group with SIN is a compact extension of its center, see~\cite[Theorem 4.3]{GM_67}. The reader may also find a list of topological groups with SIN in \cite[Section 3]{P_17}.

A topological topological group $G$ is called \emph{amenable} if the space \begin{displaymath}
    \RUCB(G) := \{ f \in \ell^{\infty}(G) \mid \forall \epsilon > 0 \, \exists U \in \mathcal{U}(G) \, \forall x \in G \, \forall y \in Ux \colon \, \vert f(x) - f(y) \vert  \leq \epsilon \}
\end{displaymath} of right-uniformly continuous bounded real-valued functions on $G$ admits a left-invariant mean, i.e., a positive unital linear functional $\mu \colon \RUCB(G) \to \mathbb{R}$ such that $\mu(f \circ {\lambda_{g}}) = \mu(f)$ for all $f \in \RUCB(G)$ and $g \in G$, where $\lambda_{g} \colon G \to G, \, x \mapsto gx$ for $g \in G$. By work of Rickert~\cite[Theorem~4.2]{R67}, a topological group $G$ is amenable if and only if every continuous action of $G$ by affine homeomorphisms on a non-void compact convex subset of a locally convex topological vector space admits a fixed point. We refer to~\cite{R67,P_06,GdlH,ST_F} for further characterizations and closure properties of the class of amenable topological groups.
	
A topological group $G$ is said to be \emph{extremely amenable} if every continuous action of $G$ on a non-empty compact Hausdorff space has a fixed point, or equivalently, if the algebra $\RUCB(G)$ admits a left-invariant multiplicative mean. The reader is referred to~\cite{P_06,GdlH} for further details on this property and a discussion of examples.
	
\subsection{Equivalence relations and their full groups}\label{CBER} Below we give a very brief description of the key properties of countable Borel equivalence relations, and we refer the reader to~\cite{KM} for the comprehensive treatment of the subject.
    
Let $(X,\mu)$ be a \emph{standard measure space}, that is, $X$ is a standard Borel space and $\mu$ is a non-atomic probability measure on $X$. A \emph{countable Borel equivalence relation} on $X$ is any equivalence relation $R$ on $X$ such that $R$ is a Borel subset of $X \times X$ and the equivalence classes of $R$ are countable.
	
Two countable Borel equivalence relations $R$ and $E$ on standard measure spaces $(X,\mu)$ and $(Y,\nu)$ are called  \emph{orbit equivalent} if there is a Borel measure isomorphism $\phi\colon X \rightarrow Y $ which maps equivalence classes of $R$ to equivalence classes of $E$ on an invariant set of full measure. For a countable Borel equivalence relation $R$ on $X$, we will call the Borel full group, denoted by $[R]_B$, the group of all Borel automorphisms $\phi$ of $X$ such that the graph of $\phi$ is contained in $R$. 

Let $R$ be a countable Borel equivalence relation $R$ on a standard measure space $(X, \mu)$. We will say that $\mu$ is \emph{$R$-invariant} (resp., \emph{$R$-quasi-invariant}) if $\mu$ (resp., the measure class of $\mu$) is preserved under the action of $[R]_B$ on $(X,\mu)$. An $R$-quasi-invariant probability measure $\mu$ is called \emph{$R$-ergodic} if every $R$-invariant Borel set is either null or co-null. Finally, we denote by $\Aut(X, \mu)$ the group of all Borel automorphisms of $X$ that preserve the measure class of $\mu$ in which two automorphisms that differ on a set of $\mu$ measure $0$ are identified. 

\begin{definition}\label{definition:uniform distance} Let $R$ be a countable Borel equivalence relation on a standard measure space $(X,\mu)$ such that $\mu$ is $R$-quasi-invariant. The \emph{full group} $[R]$ is defined as the group of all Borel automorphisms $\phi \in \Aut(X,\mu)$ such that $\graph(\phi) \subseteq R$ on a subset of full measure. Furthermore, the \emph{uniform distance} on $[R]$ defined as \begin{displaymath}
    d(g,h) \, := \, \mu(\{x \in X \mid g(x) \neq h(x) \}) \qquad (g,h \in [R]).
\end{displaymath} \end{definition}	
	  
If $R$ is a countable Borel equivalence relation $R$ on a standard measure space $(X,\mu)$ and $\mu$ is $R$-quasi-invariant, then the corresponding uniform distance constitutes a left-invariant metric on $[R]$ and it generates a group topology on $[R]$, which we refer to as the \emph{uniform topology}. In case that $\mu$ is even $R$-invariant, the uniform distance is both left- and right-invariant, whence the uniform topology has SIN.

\begin{remark} In the case of $R$-quasi-invariant measure $\mu$, some authors include the summand $\mu(\{x \in X \mid g^{-1}(x) \neq h^{-1}(x) \})$ in the definition of the uniform distance, see for example~\cite[pp.~920--921]{D95}. In this case, the uniform distance on $[R]$ is complete, but if $\mu$ is not $R$-invariant this metric fails to be left-invariant in  general case. Both approaches to defining the uniform distance are common in the literature, see for example, \cite[Section 4.1 and Section 5.1]{GP}, and they induce the same Polish topological group structure on $[R]$. However, in general, if $\mu$ is not $R$-invariant the metric in Definition~\ref{definition:uniform distance} is not complete. \end{remark}
	
If $R$ is measure-preserving and ergodic, its orbit equivalence class is completely determined by the isomorphism class of $[R]$  viewed as either a topological or abstract group (due to Dye's reconstruction theorem, see for example~\cite[Theorem~4.1]{K}).
	
We are going to state the definition of an amenable equivalence relation in terms of the Reiter's sequences. Amenability of equivalence relations has several equivalent definitions, and  properties of amenable groups often translate to amenable equivalence relations, in particular, there is an  analogue of the Følner condition, see~\cite[Chapter~9]{KM} for the details. In the case of $R$-invariant measure $\mu$, this condition implies amenability of bounded Borel graphs whose connected components are contained in the equivalence classes of the amenable equivalence relation.

\begin{definition} A countable Borel equivalence relation $R$ on a standard measure space $(X,\mu)$ is called \emph{$\mu$-amenable} if it admits a sequence of functions $l_n \colon R \rightarrow \mathbb{R}_{\geq 0}$ which satisfy the following conditions: \begin{enumerate}
	\item[$(1)$] $l_n$ is a Borel function.
	\item[$(2)$] $\Vert l_n(x,\cdot) \Vert_{1}=1$ for $\mu$-almost every $x \in X$.
	\item[$(3)$] $\Vert l_n(x,\cdot) -l_n(y,\cdot) \Vert_{1} \rightarrow 0$ for any $(x,y) \in R$ on an invariant Borel set of full $\mu$-measure.
\end{enumerate} \end{definition}
	
A Borel action of an amenable countable group $G$ on a standard measure space $(X,\mu)$ with quasi-invariant measure $\mu$ always induces a $\mu$-amenable equivalence relation. Moreover, this construction is exhaustive: any $\mu$-amenable equivalence relation $R$ on the standard measure space $(X,\mu)$ may be realized (on a subset of full measure) as the orbit equivalence relation of a Borel action of an amenable countable group $G$ on $(X,\mu)$. Recall that a countable Borel equivalence relation is said to be \emph{$\mu$-hyperfinite} if it can be represented as a union of an increasing sequence of finite Borel equivalence subrelations on a set of full measure. Then Dye's theorem, Connes--Feldman--Weiss theorem and Ornstein--Weiss theorem (see~\cite[Chapters 6, 7, 10]{KM}) establish the following characterization of $\mu$-amenable equivalence relations.

\begin{theorem}\label{thm: summary_d_cfw_ow} Let $R$ be a countable Borel equivalence relation on a standard measure space $(X,\mu)$ with $R$-quasi-invariant measure $\mu$. Then the following are equivalent. \begin{enumerate}
    \item[$(1)$] $R$ is $\mu$-amenable.
	\item[$(2)$] $R$ is $\mu$-hyperfinite.
	\item[$(3)$] On an invariant Borel set of full measure $R$ is induced by a Borel action of $\mathbb{Z}$.
\end{enumerate} Moreover, all ergodic measure-preserving $\mu$-hyperfinite countable Borel equivalence relations belong to the same orbit equivalence class. \end{theorem}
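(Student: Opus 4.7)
The plan is to prove each of the three implications in the cycle $(3)\Rightarrow(1)\Rightarrow(2)\Rightarrow(3)$ and treat the final orbit-equivalence assertion separately, invoking three well-known classical theorems as black boxes where appropriate.

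I would start with the easy direction $(3)\Rightarrow(1)$. Assume that on an invariant Borel conull set, $R$ is the orbit equivalence relation of a Borel $\mathbb{Z}$-action $T$. Define, for each $n\geq 1$ and $(x,y)\in R$,
\[
l_n(x,y) \, := \, \frac{1}{2n+1}\mathbf{1}\{\,y=T^{k}x \text{ for some } |k|\leq n\,\}.
\]
Borel measurability and $\Vert l_n(x,\cdot)\Vert_1=1$ are automatic, while for $(x,y)\in R$ with $y=T^{k}x$ the functions $l_n(x,\cdot)$ and $l_n(y,\cdot)$ agree on an orbit segment of length $2n+1-|k|$, so $\Vert l_n(x,\cdot)-l_n(y,\cdot)\Vert_1\leq |k|/(2n+1)\to 0$. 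This is the standard Reiter sequence for $\mathbb Z$ lifted to the orbits.

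For $(1)\Rightarrow(2)$ I would invoke the Connes--Feldman--Weiss theorem (see \cite[Chapter~10]{KM}): starting from a Reiter sequence $(l_n)$, one produces, by a delicate Borel maximality argument combined with Rokhlin-type tower constructions and measurable selection, an increasing sequence of finite Borel subequivalence relations $R_n\subseteq R$ whose union is $R$ on a conull invariant set. This is by far the main obstacle: translating the analytic Reiter condition into a combinatorial hyperfinite exhaustion requires a careful quantitative approximation of $l_n$ by convex combinations of indicator functions of finite sections, plus a measurable version of the Ornstein--Weiss quasi-tiling machinery. I would not reprove this here but cite it. The reverse direction $(2)\Rightarrow(1)$, which I use implicitly to close the loop, is straightforward: if $R=\bigcup_n R_n$ is hyperfinite with $R_n$ of finite classes of size $k_n(x)$ at $x$, then $l_n(x,y) := \mathbf{1}_{R_n}(x,y)/k_n(x)$ is a Reiter sequence.

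For $(2)\Rightarrow(3)$ I would apply the measurable analogue of Dye's generator theorem (cf.\ \cite[Chapter~6--7]{KM}): a hyperfinite $R$ is generated by a single Borel automorphism of $(X,\mu)$. The strategy is to realize each finite $R_n$ by an explicit Borel automorphism $T_n$ (choose a Borel linear order on each $R_n$-class and let $T_n$ be the successor), and then amalgamate the $T_n$'s into a single automorphism $T$ via a back-and-forth construction that cycles through the classes of $R_n$ in a way consistent with the eventual equality $R=\bigcup_n R_n$. The resulting $T$ generates an orbit equivalence relation equal to $R$ off a null set, establishing $(3)$.

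Finally, the uniqueness up to orbit equivalence of ergodic measure-preserving $\mu$-hyperfinite $R$ on non-atomic standard spaces is Dye's theorem in its classical form (\cite[Theorem~4.2]{K}, also discussed in \cite[Chapter~7]{KM}); any two such relations admit an isomorphism of the underlying measure spaces carrying one to the other on invariant conull sets, because both are generated by measure-preserving ergodic $\mathbb Z$-actions and Dye's theorem identifies the orbit equivalence class of any such action. The bulk of the difficulty of the whole theorem thus resides in the Connes--Feldman--Weiss step; the other implications are either formal or reduce to classical generator and uniqueness results.
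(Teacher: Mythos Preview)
Your proposal is correct and matches the paper's treatment: the paper does not give its own proof of this theorem but simply records it as background, attributing the equivalences and the uniqueness statement to Dye's theorem, the Connes--Feldman--Weiss theorem, and the Ornstein--Weiss theorem with a reference to \cite[Chapters~6,~7,~10]{KM}. Your sketch expands slightly on this by writing down an explicit Reiter sequence for the $\mathbb{Z}$-generated case (with the usual caveat that on finite orbits one normalizes by the orbit size rather than $2n+1$) and outlining the generator construction, but the substance is the same black-box invocation of the classical results.
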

    
Furthermore, amenability of equivalence relations with respect to a measure is preserved under passing to subequivalence relations, taking products and passing to product measures, and taking unions of increasing sequences of equivalence relations, see, for example ~\cite[Chapters~6,~9,~10]{KM}.
	
Amenability of an equivalence relation is reflected in properties of its full group. When measure $\mu$ is invariant, non-atomic and ergodic, amenable $R$ produce the unique isomorphism class of full groups $[R]$ (in either abstract or topological group setting) and the topology on $[R]$ is Polish.  Furthermore, $\mu$-amenability of $R$ is equivalent to extreme amenability of $[R]$, equipped with the uniform distance, by a result of Pestov and Giordano (see~\cite[Theorem~5.7]{GP}, which is valid for quasi-invariant measures). 
	
For a $\mu$-amenable $R$ and  an ergodic $\mu$, the class of groups which could be densely embedded in  $[R]$ includes the  topological full groups of the actions of countable discrete amenable groups on the Cantor space and their commutator subgroups, and in particular, groups of interval exchange transformations with breakpoints in a given countable subgroup of the unit circle $\Lambda$ (denoted by $\IET(\Lambda)$).
	
\section{Uniform Liouville property} \label{ULiouville}
 
In this section we prove the following result. Note that we only assume that the measure is quasi-invariant with respect to the considered equivalence relation. 
	
\begin{theorem}\label{UnifLiouville} Let $R$ be a countable Borel equivalence relation on a standard measure space $(X,\mu)$ such that $\mu$ is $R$-quasi-invariant. Let $G$ be a dense countable subgroup of $[R]$. Then the following are equivalent. \begin{enumerate}
	\item[$(1)$] $R$ is $\mu$-amenable.
	\item[$(2)$] There exists a symmetric non-degenerate measure $\nu$ on $G$ such that the action of  $G$ on almost every orbit in $X$ is $\nu$-Liouville.
\end{enumerate} \end{theorem}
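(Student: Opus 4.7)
The plan is to prove both implications through Reiter-type characterisations, using the standard equivalent formulation of the Liouville property of a group action on a discrete set as total-variation convergence of the displaced $n$-step transition distributions (a form of the Derriennic zero--two law, valid here because the symmetry of $\nu$ makes the induced walk on each orbit reversible with respect to counting measure).

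\textbf{$(2)\Rightarrow(1)$.} A short Borel argument, using that every edge $(x,y) \in R$ is realised by some $\phi \in [R]$ which $G$ approximates in the uniform metric, shows that for $\mu$-a.e.\ $x \in X$ one has $G\cdot x = [x]_R$. Given a symmetric non-degenerate $\nu$ on $G$ making the $G$-action $\nu$-Liouville on almost every orbit, I would invoke the aforementioned equivalence: the $\nu$-Liouville property of the $G$-action on $[x]_R$ gives $\Vert \pi^x_n - \pi^{gx}_n\Vert_1 \to 0$ for every $g \in G$ and $\mu$-a.e.\ $x$, where $\pi^x_n(y) := \sum_{h \in G,\, hx = y} \nu^{*n}(h)$ is the $n$-step distribution of the walk started at $x$. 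Setting $l_n(x,y) := \pi^x_n(y)$ on $R$ yields non-negative Borel functions with $\Vert l_n(x,\cdot)\Vert_1 = 1$ and $\Vert l_n(x,\cdot) - l_n(y,\cdot)\Vert_1 \to 0$ for every $(x,y) \in R$ on a $G$-invariant Borel set of full measure, which is exactly the Reiter condition witnessing $\mu$-amenability of $R$.

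\textbf{$(1)\Rightarrow(2)$.} By the Connes--Feldman--Weiss theorem recalled in the preliminaries, write $R = \bigcup_n R_n$ on an invariant Borel set of full measure, with each $R_n$ a finite Borel subequivalence relation. For each $n$ I would produce a finite symmetric subset $\Phi_n \subseteq [R_n]$ and an integer $K_n$ such that the $K_n$-fold convolution of the uniform probability measure on $\Phi_n$ is $2^{-n}$-close in total variation to the uniform distribution on every $R_n$-class; this is constructed fibrewise from transpositions along $R_n$-edges via standard mixing-time estimates for random walks on finite symmetric groups. Density of $G$ in $[R]$ then allows me to pick a symmetric $F_n \subseteq G$ of the same cardinality as $\Phi_n\cup\Phi_n^{-1}$ approximating it in the uniform metric within $2^{-n}/(K_n|\Phi_n|)$. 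Fixing an enumeration $\{g_m\}_{m\ge 1}$ of $G$, I define
\[
 \nu \, := \, \sum_{n \ge 1} \frac{1}{2^{n+1}|F_n|} \sum_{g \in F_n} \delta_g \;+\; \sum_{m \ge 1} c_m \frac{\delta_{g_m} + \delta_{g_m^{-1}}}{2},
\]
with the weights $c_m>0$ summable and sufficiently small that $\nu$ is a symmetric non-degenerate probability measure on $G$ while the second (tail) summand has $2^{-n}$-negligible weight within each block of length $K_n$. Any bounded $\nu$-harmonic $f$ on $[x]_R$ equals $\sum_g \nu^{*K_n}(g) f(g\cdot)$ pointwise, and expanding $\nu^{*K_n}$ into its $F_n$-block contributions together with the mixing property on $R_n$-classes and the smallness of the tail forces $f$ to be $O(\Vert f\Vert_\infty\, 2^{-n})$-constant on every $R_n$-class; letting $n \to \infty$ and using $R = \bigcup_n R_n$ yields the Liouville property.

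\textbf{Main obstacle.} The principal technical difficulty is the \emph{simultaneous measurable} realisation of $\Phi_n$ and of its approximations $F_n \subseteq G$, so that the fibrewise mixing estimate on $R_n$-classes holds on $\mu$-a.e.\ orbit for a single fixed $\nu$. This requires Borel transversals of $R_n$ and Borel selections of permutations on each $R_n$-class, combined with the Polish structure of $[R]$ in the uniform metric to promote pointwise approximations by elements of $G$ into a uniformly controlled one. A secondary subtlety is to preserve the harmonicity-based mixing estimate after adding the auxiliary enumeration tail needed for non-degeneracy, which is handled by making the weights $c_m$ decay fast enough that the tail has negligible influence on the walk at each mixing scale $K_n$.
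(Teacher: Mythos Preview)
Your direction (2)$\Rightarrow$(1) is essentially the paper's argument; the only slip is the justification via reversibility. Reversibility alone does not force $\Vert P^n(x,\cdot)-P^n(y,\cdot)\Vert_1\to 0$ from the Liouville property (simple random walk on $\mathbb{Z}$ is symmetric, reversible and Liouville, yet $\Vert P^n(0,\cdot)-P^n(1,\cdot)\Vert_1=2$ for all $n$ by parity). The paper handles this by passing to the lazy version $\tfrac12\delta_{\id}+\tfrac12\nu$, which has the same bounded harmonic functions; with that fix your Reiter sequence $l_n=P_\nu^n$ is exactly what the paper uses.

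In (1)$\Rightarrow$(2) there is a genuine gap, and it is not the measurable-selection issue you flag. The problem is the harmonic-function step. In your $\nu$, the $F_n$-block carries mass $2^{-(n+1)}$, so in the expansion of $\nu^{*K_n}$ the event ``all $K_n$ increments lie in $F_n$'' has weight $(2^{-(n+1)})^{K_n}$; the remaining mass of $\nu^{*K_n}$ involves steps from $F_m$ with $m>n$, which do \emph{not} preserve $R_n$-classes, so it gives no control on $f(x)-f(y)$ for $x\sim_{R_n}y$. Even in the idealised case $K_n=1$, writing $f=\sum_m 2^{-m-1}\mu_m f+\text{tail}$ only yields $|f(x)-f(y)|\le \sum_{m<n}2^{-m}\Vert f\Vert_\infty$ for $x\sim_{R_n}y$, since for $m<n$ the points $x,y$ need not be $R_m$-equivalent; this bound does not tend to $0$. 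The paper circumvents this by a different mechanism: it uses Connes--Feldman--Weiss in the form ``$R$ is generated by a single $T\in[R]$'', approximates $T$ by $f_n\in G$, and takes $\nu_n$ uniform on $\{f_n^{k}:|k|\le n^2\}$, so that a \emph{single} application of $\nu_n$ already satisfies $\Vert\delta_x\nu_n-\delta_y\nu_n\Vert_1<\epsilon_n$ on the F{\o}lner-type window $K_n(x)$. A separate Kaimanovich--Vershik lemma (Lemma~\ref{basic}) then builds $\nu=\sum_j c_j\nu_{n_j}$ and exponents $m_j$ so that in $\nu^{m_j}$ one only needs \emph{at least one} factor $\nu_{n_k}$ with $k\ge j$ to appear (which happens with probability $\ge 1-1/j$), and that single factor already smooths. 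Your scheme could be repaired by replacing the building block ``uniform on $F_n$'' with its $K_n$-fold self-convolution and then invoking Lemma~\ref{basic} instead of the direct harmonic argument, but as written the convolution bookkeeping does not close.
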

 
The proof of (1)$\Longrightarrow$(2) resembles the proof of a generalization of~\cite[Lemma~2]{JZ}, which in turn is inspired by the argument originally due to Kaimanovich and Vershik. Since this version of the lemma does not appear in the literature, and its proof is a good illustration of the method, we will state and prove it for reader's convenience. We will use the following notation in the statement of the lemma and in the subsequent sections of the article. When a group $G$ acts on a set $X$ and $\nu$ is a probability measure on $G$, we will denote by $P_{\nu}$ the transition probability of the induced random walk on $X$ which is explicitly expressed as $P_{\nu}(x,y)=\nu(\{g \in G \mid gx=y \})$. Notice that if $\nu$ is a symmetric probability measure on $G$, then for any $n \geq 1$ the convolution $P_{\nu}^n$ is equal to $P_{\nu^n}.$ Moreover, to simplify the notation, if $\theta $ is a probability measure on $X$ and $\nu$ is a probability measure on $G$, then abbreviate the convolution as $\theta \nu := \theta P_{\nu}$.
 
\begin{lemma}\label{lemma:basic} Let a countable group $G$  act transitively on the set $Y$. Suppose that there exist an increasing sequence $(K_n)_{n \in \mathbb{N}}$ of finite sets with $\bigcup_{n \in \mathbb{N}} K_{n} = Y$ and a sequence $(\epsilon_n)_{n \in \mathbb{N}}$ of real numbers decreasing to $0$ such that, for each $n \in \mathbb{N}$, there exists a symmetric probability measure $\nu_n$ on $G$ with finite support such that \begin{equation}\label{TV}
 	\sup\nolimits_{x,y \in K_n}	\|\delta_x \nu_n  - \delta_y \nu_n \|_1 \, < \, \epsilon_n.	
\end{equation} Then there exists a non-degenerate symmetric probability measure $\nu$ on $G$ such that the action of $G$ on $Y$ is $\nu$-Liouville. \end{lemma} 

\begin{proof} The proof of this lemma is constructive. The desired measure $\nu$ is defined as an infinite convex combination $$\nu \, = \, \sum\nolimits_{j \geq 0} c_j \nu_{n_j}.$$ At the first step, we choose an arbitrary symmetric non-degenerate probability measure $\nu_0$ on $G$ and an arbitrary sequence of positive reals $(c_j)_{j \geq 0}$ with $\sum_{j \geq 0} c_{j} = 1$. Then, the indices $n_j$ are defined inductively in order to guarantee that $$ \liminf\nolimits_{m\rightarrow \infty}\| \delta_x \nu^m -\delta_y \nu^m \|_1 \, = \, 0 $$ for any pair $x,y \in Y$. More precisely, one first selects a sequence $m_j$ such that $(c_0+\ldots+c_{j-1})^{m_j} \leq 1/j$ and then inductively defines an auxiliary sequence  of approximations $\theta_j $ to $\nu_0$ and the sequence  $n_j$. Once we know $n_{j-1}$, let $S_j$ be the finite set of the measures which can be represented as a convolution (in any order and possibly with repetitions) of at most  $m_j$ measures from the collection $\nu_0,\ldots,\nu_{n_{j-1}}$. Since $S_j$ is finite, one can choose $\theta_j$ as a restriction of $\nu_0$ to a sufficiently large finite subset of $G$ in a way that ensures that, if one takes any convolution $s \in S_j$ and replaces every occurrence of $\nu_0$ by $\theta_j$, then the resulting measure $s'$ satisfies the inequality $\|s-s' \|_1 \leq 1/j$. We denote the updated set of convolutions $S'_j$.  Then we choose $n_j$ so that $K_{n_j}$ contains any set of the form $gK_{n_{j-1}}$, where $g \in G$ could be any element in the support of any measure from $S'_j$ (notice that there are only finitely many such elements). With this choice of $n_j$ one can show that, for any $x,y \in K_{n_{j-1}}$ and any $s \in S_j$, \begin{align*}
 	\|\delta_x s\nu_{n_j}-\delta_ys\nu_{n_j} \|_1 \, &\leq \, \|\delta_x s'\nu_{n_j}-\delta_ys'\nu_{n_j} \|_1 +\|\delta_x (s-s')\nu_{n_j}-\delta_y(s-s')\nu_{n_j} \|_1  \\ 
 	&\leq \, \epsilon_{n_j}+ 2\| s-s'\|_1 \leq \epsilon_{n_j}+2/j .
\end{align*} In the inequalities above the convolutions of the form $\delta_x s\nu$ should be interpreted as $\delta_x P_s P_{\nu}$, and we implicitly use the inequality $$\| \delta_xP_s -\delta_x P_{s'}\|_1 \, \leq \, \|s-s' \|_1.$$ As a result, we can estimate $\|\delta_x\nu^{m_j}  -\delta_y\nu^{m_j} \|_1$ for any $x,y \in K_{n_{j-1}}$ as follows. Let $$\lambda \, := \, \sum\nolimits_{k=0}^{j-1} c_k \nu_{n_k}.$$ Then $\|\lambda^{m_j}\|_1 =(c_0+\ldots+c_{j-1})^{m_j} \leq 1/j$ and $\nu'=\nu^{m_j}-\lambda^{m_j}$ contains only the summands with at least one appearance of some measure $\nu_N$ with $N \geq n_j$. But then the choice of $n_j$ and the previous inequalities  imply that \begin{align*}
    \| \delta_x \nu^{m_j}  - \delta_y \nu^{m_j}\|_1 \, &\leq \, \| \delta_x \lambda^{m_j}  -\delta_y\lambda^{m_j}\|_1 + \| \delta_x \nu'  - \delta_y \nu' \|_1\\
    &\leq \, 2/j +(1-1/j)(\epsilon_{n_j}+ 2/j) < 4/j+\epsilon_{n_j} .
\end{align*} Therefore, since $K_n$ is increasing and exhausts $Y$, for any $x,y \in Y$ $$ \lim\nolimits_{j\rightarrow \infty}\|  \delta_x \nu^{m_j}  - \delta_y \nu^{m_j}\|_1 \, = \, 0 . $$ This finishes the proof. \end{proof}

Now we are ready to prove the theorem.

\begin{proof}[Proof of Theorem~\ref{UnifLiouville}] We first prove the implication (2)$\Longrightarrow$(1). We do not need the density assumption for this direction, only the fact that $G$ generates $R$. Assume that there exists a non-degenerate probability measure $\nu$ on $G$ such that the action on almost every $G$-orbit is $\nu$-Liouville. We may assume that $\nu$ defines a lazy random walk (a random walk driven by $\nu$ is \emph{lazy} if $\nu(1) \geq \frac{1}{2}$). Then~\cite[Lemma~1]{JZ} implies that the sequence of functions $l_n(x,y)=P_{{\nu}^n}(x,y), n \geq 1,$ where $P_{{\nu}^n}(x,y)$ denotes the probability that the random walk on $X$ induced by $v$ and started at $x$ is at the point $y$ after $n$ steps, satisfies the following conditions: \begin{enumerate}
	\item[---\,] $l_n$ is a nonnegative Borel function supported on $R$.
	\item[---\,] $\Vert l_n(x,\cdot) \Vert_{1}=1$ for $\mu$-almost every $x \in X$.
	\item[---\,] $\Vert l_n(x,\cdot) -l_n(y,\cdot) \Vert_{1} \rightarrow 0$ for any $(x,y) \in R$ on an invariant Borel set of full $\mu$-measure.
\end{enumerate} Therefore, this sequence witnesses $\mu$-amenability of $R$.
		
In order to prove the reverse implication, we will translate the construction in the proof of Lemma~\ref{lemma:basic} to the measurable setting. Recall that, by the Connes-Feldman-Weiss theorem (\cite{CFW81}, also see Theorem~\ref{thm: summary_d_cfw_ow}(3)), $\mu$-amenability of $R$ implies that there is $T \in [R]$ such that, for all $x$ in a co-null subset $A \subseteq X$, the  $T$-orbit of $x$ and its equivalence class in $R$ coincide.
		
Then, for each $x \in A$, the sequence of sets \begin{displaymath}
    \left. K_n(x)\, := \, \left\{T^{k}(x)\, \right\vert k \in \{-n,\ldots,n\}\right\} 
\end{displaymath} is increasing and $\mu$-essentially exhausts the equivalence class of $x$. Since $G$ is dense in $[R]$ and group operations are continuous, for each $n$, we find $f_n \in G$ such that \begin{displaymath}
    \left. \max \left\{ d(f_n^k,T^k) \, \right\vert -n^2-n \leq k \leq n^2+n \right\} \, \leq \, 1/2^n .
\end{displaymath} This choice guarantees that the set $$B_n \, := \, \left\{x \in X \left\vert \, \exists k \in \mathbb{Z} \colon \, -n^2-n \leq k \leq n^2+n, \, f_n^k(x)\neq T^k(x) \right\} \right.$$ satisfies $\mu(B_n) \leq n^3/2^n$ for all sufficiently large $n$. 
		
Let $\nu_n$ be the uniform measure on the set $\{f_n^k \mid k \in \{-n^2,\ldots, n^2\}\} $. Then, for $\epsilon_n \approx 1/n$, for any $x \in  X\setminus B_n$,  the set $K_n(x)$ and measure $\nu_n$ satisfy the condition~\eqref{TV} from Lemma~\ref{lemma:basic}.

Next, we fix a sequence of positive weights  $(c_j)_{j \in \mathbb{Z}_+}$ with the sum equal to 1, and construct $$\nu \, = \, \sum\nolimits_{j \geq 0} c_j \nu_{n_j}$$ as in the proof of Lemma~\ref{lemma:basic}. The sequence of indices $n_j$ is defined inductively, however in this setting we will choose $n_j$ large enough so that the set \begin{displaymath}
    D_j \, := \, \left\{x \in X \left\vert \, \exists g \in \bigcup\nolimits_{s \in S'_j} \supp(s) \colon \, gK_{n_{j-1}}(x) \nsubseteq K_{n_j}(x) \right\} \right.
\end{displaymath} has measure $\mu(D_j)<1/2^n$, where $S'_j$ is defined as in the proof of Lemma~\ref{lemma:basic}.
		
With this construction the Borel-Cantelli lemma ensures that the set of points which belong to infinitely many sets in the sequence  $(B_n)$ or $(D_j)$ has measure 0. Therefore, after removing these points together with their orbits, we get the set of the full measure where the estimates from the proof of Lemma~\ref{lemma:basic} hold for each orbit, which completes the proof of the theorem. \end{proof}
	
\begin{remark} It is easy to see that the proof of the implication (1)$\Longrightarrow$(2) works when the closure of $G$ in $[R]$ contains a discrete amenable group which generates $R$. It is also easy to see that the conditions of Lemma~\ref{lemma:basic} are satisfied whenever the closure of $G$ in $\Sym(Y)$ equipped with the topology of pointwise convergence contains a discrete amenable group acting transitively on $Y$. \end{remark} 
	
\begin{corollary}\label{cor3} Let $R$ be a non-amenable ergodic invariant countable Borel equivalence relation on a standard measure space $(X,\mu)$, and let $G$ be a dense subgroup of $[R]$. Then for $\mu$-almost every $x \in X$ there exists a symmetric non-degenerate probability measure $\nu_x$ on $G$ such that for every natural number $n$ the action of $G$ on the set of $n$-tuples of points from the orbit of $x$ is $\nu_x$-Liouville, but there is no symmetric non-degenerate probability measure $\nu$ on $G$ such that action on almost every orbit is $\nu$-Liouville. \end{corollary}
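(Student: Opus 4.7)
The second part is immediate from Theorem~\ref{UnifLiouville}: a single symmetric non-degenerate $\nu$ on $G$ making the $G$-action on almost every orbit $\nu$-Liouville would, by the implication $(2)\Longrightarrow(1)$, force $\mu$-amenability of $R$, contradicting the hypothesis.

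For the first part, the plan is to verify, for $\mu$-a.e.\ $x$, the hypotheses of the remark following Theorem~\ref{UnifLiouville} on every $G$-orbit inside each $[x]_R^n$, and then amalgamate the resulting per-orbit Liouville measures. Concretely, we would establish the following \emph{density statement}: for $\mu$-a.e.\ $x$, the closure of $G$ inside $\Sym([x]_R)$ with the topology of pointwise convergence contains the locally finite (hence discrete amenable) group $\Sym_{\mathrm{fin}}([x]_R)$ of finitely supported permutations. Granted this, $\Sym_{\mathrm{fin}}([x]_R)$ acts diagonally and transitively on each pattern class of $[x]_R^n$; a short continuity argument analogous to the one in the proof of Theorem~\ref{UnifLiouville} shows that the pattern classes in $[x]_R^n$ coincide with the $G$-orbits of the diagonal action. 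The remark then supplies, for every such orbit, a symmetric non-degenerate measure on $G$ making the action on that orbit Liouville, and since there are only countably many such orbits across all $n\ge 1$, a diagonal amalgamation in the style of the proof of Lemma~\ref{basic} combines them into a single symmetric non-degenerate $\nu_x$ with the desired properties.

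To prove the density statement we would fix, via Feldman--Moore, a countable group $\Gamma=\{\gamma_0,\gamma_1,\dots\}\le[R]$ with $R=R_\Gamma$. Every finite subset of $[x]_R$ has the form $\{\gamma_{i_1}(x),\dots,\gamma_{i_k}(x)\}$ and every permutation of it is coded by some $\sigma\in\Sym(k)$, so it is enough to show that for each configuration $(i_1,\dots,i_k,\sigma)$, for $\mu$-a.e.\ $x$ there exists $g\in G$ with $g(\gamma_{i_j}(x))=\gamma_{i_{\sigma(j)}}(x)$ for every $j$; intersecting over the countably many configurations then yields the density claim. The idea is to construct a single Borel $h\in[R]$ realizing this correspondence on a Borel set of positive $\mu$-measure---the naive piecewise description $h|_{\gamma_{i_j}(X)}=\gamma_{i_{\sigma(j)}}\gamma_{i_j}^{-1}$ being multi-valued, one resolves it by means of an auxiliary Borel coloring of $X$ compatible with the $\Gamma$-action---to approximate $h$ in the uniform topology by $g_n\in G$, and to apply Borel--Cantelli to the $\mu$-preserving pre-images $\gamma_{i_j}^{-1}(\{z:g_n(z)\ne h(z)\})$. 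Ergodicity of $\mu$ then propagates the resulting positive-measure existence to a $\mu$-conull one. The main technical obstacle is arranging this measurable selection so that a single Borel element of $[R]$ actually implements the prescribed finite permutation on a full-measure set of $x$-fibers, since the uniform-topology density of $G$ in $[R]$ controls behavior only in a measure-theoretic sense and does not by itself yield pointwise density on a fixed measure-zero orbit.
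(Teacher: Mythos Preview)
Your second part matches the paper exactly. For the first part, the paper follows the same overall strategy---show that $G$ is highly transitive on almost every orbit, then use amenability of $\Sym([x]_R)$ with the pointwise convergence topology to produce $\nu_x$---but it outsources both steps to the literature: high transitivity is quoted from \cite[Proposition~1.19]{EG}, and a single measure $\nu_x$ making \emph{all} $n$-tuple actions simultaneously Liouville is obtained directly from \cite[Theorem~5.8]{ST} (which applies because $G$ sits densely in the amenable Polish group $\Sym([x]_R)$ and the diagonal actions on $[x]_R^n$ are continuous for this topology). So there is no per-orbit construction followed by amalgamation in the paper.

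Your hand-made route is in the right spirit but has two soft spots. First, the ergodicity step in your density argument is misconceived: the set of $x$ for which $\exists g\in G$ with $g(\gamma_{i_j}(x))=\gamma_{i_\sigma(j)}(x)$ is \emph{not} $[R]$-invariant, so you cannot pass from positive measure to conull that way. What one actually does (and what you correctly flag as the ``main technical obstacle'') is construct $h\in[R]$ implementing the prescribed finite permutation on a \emph{conull} set of fibers, approximate by $g_n\in G$, and Borel--Cantelli; this is essentially the content of \cite[Proposition~1.19]{EG}. Second, your amalgamation of countably many per-orbit Liouville measures ``in the style of Lemma~\ref{basic}'' is plausible but not free: Lemma~\ref{basic} is written for a single target set $Y$, and making the inductive choice of $n_j$ serve countably many sets $Y_m$ simultaneously requires an interleaving argument you have not carried out. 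That amalgamation is precisely what \cite[Theorem~5.8]{ST} packages, which is why the paper's proof is one paragraph.
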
 

\begin{proof} \cite[Proposition~1.19]{EG} implies that action of $G$ on almost  every orbit is highly transitive.  In particular, for almost every $x \in X$,  $G$ can be viewed as a dense subgroup of the group of all permutations on the orbit of $x$ equipped with the topology of pointwise convergence, and the latter is amenable as a topological group. Therefore, \cite[Theorem~5.8]{ST} implies that for almost every $x \in X$ one can find a symmetric  non-degenerate probability measure $\nu_x$  on $G$ such that the action of $G$ on the set of $n$-tuples of points from the orbit of $x$ is $\nu_x$-Liouville. However, our Theorem~\ref{UnifLiouville} implies that the uniform choice of the measure (finding $\nu$ which works for almost every orbit) is only possible for amenable equivalence relations. \end{proof}
			
\section{Kesten's theorem for topological groups} \label{TopKesten}

This section revolves around topological extensions of the following well-known characterization of the amenability of discrete groups due to Kesten~\cite{Kesten59}. 

\begin{theorem}[\cite{Kesten59}] Let $\Gamma$ be a finitely generated group and let $\mu$ be a finitely supported symmetric non-degenerate probability measure on $\Gamma$. Let $\rho$ be the spectral radius of the $\mu$-random walk on $\Gamma$. Then $\Gamma$ is amenable if and only if $\rho = 1$. \end{theorem}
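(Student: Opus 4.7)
The plan is to work with the Markov operator $P_{\mu}$ on $\ell^{2}(\Gamma)$ defined by $(P_{\mu}f)(x) = \sum_{g \in \Gamma} f(g^{-1}x)\mu(g)$, which is self-adjoint because $\mu$ is symmetric. The first step is to identify the spectral radius with the norm of this operator via $\rho = \|P_{\mu}\| = \limsup_{n} \mu^{\ast 2n}(e)^{1/2n}$; the equality $\mu^{\ast 2n}(e) = \langle P_{\mu}^{2n}\delta_{e},\delta_{e}\rangle$ and the spectral theorem for the bounded self-adjoint operator $P_{\mu}$ give this immediately. So the theorem reduces to showing that $\|P_{\mu}\| = 1$ if and only if $\Gamma$ is amenable.

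For the forward direction I would take a F\o lner sequence $(F_{n})$ in $\Gamma$ and set $\xi_{n} = |F_{n}|^{-1/2}\mathbf{1}_{F_{n}} \in \ell^{2}(\Gamma)$. For each $g$ in the finite set $\supp(\mu)$, the F\o lner property yields $\|\lambda(g)\xi_{n} - \xi_{n}\|_{2}^{2} = 2|F_{n} \triangle gF_{n}|/|F_{n}| \to 0$. Writing $P_{\mu} = \sum_{g \in \supp(\mu)} \mu(g)\lambda(g)$ and using that the sum is finite, I get $\|P_{\mu}\xi_{n} - \xi_{n}\|_{2} \to 0$, hence $\langle P_{\mu}\xi_{n},\xi_{n}\rangle \to 1$, so $\rho = \|P_{\mu}\| = 1$.

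For the converse, suppose $\rho = 1$. The inequality $\|P_{\mu}\| \leq 1$ is automatic from $\mu$ being a probability measure, so equality holds. Standard spectral theory then produces a sequence of unit vectors $\xi_{n} \in \ell^{2}(\Gamma)$ with $\langle P_{\mu}\xi_{n},\xi_{n}\rangle \to 1$. Since $P_{\mu}$ is a convex combination of the unitaries $\lambda(g)$ for $g \in \supp(\mu)$, a convexity argument (each summand is bounded in modulus by $1$, and the real part of the sum tends to $1$) gives $\|\lambda(g)\xi_{n} - \xi_{n}\|_{2} \to 0$ for every $g \in \supp(\mu)$. Because $\supp(\mu)$ generates $\Gamma$, this extends to all of $\Gamma$, so the trivial representation is weakly contained in the left regular representation.

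The final step is Namioka's trick to convert $\ell^{2}$ almost-invariant vectors into an actual F\o lner sequence: passing to $|\xi_{n}|^{2} \in \ell^{1}(\Gamma)$ and applying the Powers--St\o rmer-type inequality $\||\xi|^{2} - |\eta|^{2}\|_{1} \leq \|\xi - \eta\|_{2}\|\xi + \eta\|_{2}$ shows $\|\lambda(g)|\xi_{n}|^{2} - |\xi_{n}|^{2}\|_{1} \to 0$. A layer-cake decomposition of $|\xi_{n}|^{2}$ and an averaging argument over the level sets $\{x : |\xi_{n}(x)|^{2} > t\}$ then produces finite subsets of $\Gamma$ satisfying the F\o lner condition for any prescribed finite subset of generators. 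I expect this final passage from approximate invariance in $\ell^{2}$ to a genuine F\o lner sequence to be the only genuinely technical point; the rest is a routine application of the spectral theorem and convexity.
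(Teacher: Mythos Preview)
The paper does not actually prove this statement: it is recalled as background (with a reference to Kesten's original paper) at the start of Section~\ref{TopKesten}, and no argument is given. So there is no ``paper's own proof'' to compare against.

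That said, your outline is the standard one and is essentially correct. A couple of minor remarks. First, your computation in the forward direction has a harmless slip: with $\xi_{n}=|F_{n}|^{-1/2}\mathbf{1}_{F_{n}}$ one gets $\|\lambda(g)\xi_{n}-\xi_{n}\|_{2}^{2}=|gF_{n}\triangle F_{n}|/|F_{n}|$, not twice that, since $(\mathbf{1}_{gF_{n}}-\mathbf{1}_{F_{n}})^{2}=\mathbf{1}_{gF_{n}\triangle F_{n}}$. Second, in the converse you should be explicit that the $\xi_{n}$ can be taken real-valued (or replace them by $|\xi_{n}|$) before invoking the convexity step; for complex unit vectors the condition $\mathrm{Re}\,\langle\lambda(g)\xi_{n},\xi_{n}\rangle\to 1$ still forces $\|\lambda(g)\xi_{n}-\xi_{n}\|_{2}\to 0$, so this is fine, but it is worth saying. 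The passage from $\ell^{2}$ almost-invariance to F\o lner sets via $|\xi_{n}|^{2}$ and the layer-cake argument is exactly Namioka's argument and works as you indicate.
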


Kesten's criterion has generalizations to graphs and networks. Below we state a quantitative version due to Mohar relating the edge-expansion of an infinite connected network and its spectral radius (see~\cite[Section 6.1 and Section 6.2, pp.~177, 184, 185]{LP} for terminology). 

\begin{theorem}[{\cite[Theorem~6.7]{LP}}]\label{Mohar} Let $(G,c,\pi)$  be a connected infinite network, let $\Phi_E$ be its edge-expansion constant, and let $\rho$ be the spectral radius of the associated network random walk. Then \begin{equation*}
	1 - \sqrt{1-\Phi_E^2} \, \leq \, 1-\rho \, \leq \, \Phi_E .
\end{equation*} \end{theorem}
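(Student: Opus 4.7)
The plan is to split the claim into its two constituent inequalities, which require rather different arguments. Throughout I would work with the symmetric transition operator $P$ on $\ell^{2}(\pi)$, use that $\rho = \|P\|$ by self-adjointness, reduce to nonnegative finitely supported test functions (which is permissible because $P$ is positivity-preserving, so $|\langle Pf,f\rangle_{\pi}|\le\langle P|f|,|f|\rangle_{\pi}$), and repeatedly invoke the Dirichlet identity $\mathcal{E}(f,f):=\langle (I-P)f,f\rangle_{\pi}=\tfrac{1}{2}\sum_{x,y}c(x,y)(f(x)-f(y))^{2}$. The easy bound $1-\rho\le\Phi_{E}$ would then follow by testing against $f=\mathbf{1}_{S}$ for a finite vertex set $S$: one checks directly that $\mathcal{E}(\mathbf{1}_{S},\mathbf{1}_{S})=|\partial_{E}S|_{c}$ while $\|\mathbf{1}_{S}\|_{\pi}^{2}=\pi(S)$, so the Rayleigh quotient is exactly $|\partial_{E}S|_{c}/\pi(S)$, and taking an infimum over finite $S$ yields the inequality.

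For the lower bound $1-\rho\ge 1-\sqrt{1-\Phi_{E}^{2}}$, which carries the substantive content, I would run the Cheeger--Mohar argument applied to $f^{2}$ rather than $f$. Fix a nonnegative finitely supported $f$ with $\|f\|_{\pi}=1$, set $r:=\langle Pf,f\rangle_{\pi}$, and consider the level sets $S_{t}:=\{x:f(x)^{2}>t\}$, each of which is finite. The co-area identity $|f(x)^{2}-f(y)^{2}|=\int_{0}^{\infty}|\mathbf{1}_{S_{t}}(x)-\mathbf{1}_{S_{t}}(y)|\,\mathrm{d}t$, combined with Fubini and the edge-isoperimetric inequality $|\partial_{E}S_{t}|_{c}\ge\Phi_{E}\pi(S_{t})$, yields $\sum_{x,y}c(x,y)|f(x)^{2}-f(y)^{2}|\ge 2\Phi_{E}$. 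Factoring $|f(x)^{2}-f(y)^{2}|=|f(x)-f(y)|(f(x)+f(y))$ and applying Cauchy--Schwarz bounds the left-hand side above by $2\sqrt{\mathcal{E}(f,f)\,(1+r)}=2\sqrt{(1-r)(1+r)}=2\sqrt{1-r^{2}}$; rearranging gives $\Phi_{E}^{2}\le 1-r^{2}$, and passing to the supremum over admissible $f$ yields $\rho\le\sqrt{1-\Phi_{E}^{2}}$, as desired.

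The step I expect to be the main obstacle is engineering the Cheeger direction correctly: one must pass through $f^{2}$ (not $f$) so that the two Cauchy--Schwarz factors align into the clean product $(1-r)(1+r)$, and the co-area/Fubini computation must be verified in the weighted, not necessarily locally finite, network setting. A secondary but genuine technicality is the reduction of $\rho$ to a supremum over nonnegative finitely supported test functions, which is needed both to guarantee that the level sets $S_{t}$ are finite (so the definition of $\Phi_{E}$ applies) and to allow the factorization $|f^{2}(x)-f^{2}(y)|=|f(x)-f(y)|(f(x)+f(y))$ with a nonnegative second factor. Once these are in place, the two inequalities reassemble into the stated double bound.
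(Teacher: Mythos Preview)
The paper does not actually supply a proof of this theorem: it is stated as background and attributed to Mohar, with the reader referred to \cite[Theorem~6.7]{LP} for both terminology and proof. Your proposal reproduces precisely the standard Cheeger--Mohar argument found in that reference---the easy direction by testing the Dirichlet form against indicators of finite sets, and the hard direction via the co-area formula applied to $f^{2}$, Cauchy--Schwarz, and the identity $\sum_{x,y}c(x,y)(f(x)\pm f(y))^{2}=2(1\pm r)$---so there is nothing to compare. The outline is correct as written; the only points deserving care, which you already flag, are the reduction to nonnegative finitely supported $f$ (so that the level sets are finite and the factorization of $|f^{2}(x)-f^{2}(y)|$ is legitimate) and the verification that the co-area/Fubini step goes through in the weighted, possibly infinite-degree setting.
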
  

Notice that Theorem~\ref{Mohar} is valid when the network has vertices that are incident to infinitely many edges with positive weights, provided that the sum of the weights at each vertex is finite, see~\cite[Section~6.1, condition (6.1), p.~177]{LP}. Later in this section, we will apply Theorem~\ref{Mohar} to the following special type of a network. Let $\Gamma$ be a countable group acting on a set $O$ transitively, and let $\nu$ be a symmetric non-degenerate probability measure on $\Gamma$. Recall that random walk on $\Gamma$ driven by $\nu$ induces a random walk on $O$. This random walk on $O$ defines a transitive network on $O$ with the conductances given by the transition probabilities $P_{\nu}$. In this case, the isoperimetric constant $\Phi_E$ is computed as 
\[ \left. \Phi_E \, = \, \inf \left\{ \frac{|\partial_E F|_{P_\nu}}{|F|} \, \right\vert  F \subseteq O, \, F \neq \emptyset, \,|F| < \infty \right\}, \]
where for a non-empty finite set $F \subseteq O$ the weight of the edge boundary of $F$, denoted by $|\partial_E F|_{P_\nu}$, is computed as 
\[|\partial_E F|_{P_\nu} \, = \, \sum\nolimits_{x \in F, \; y \in O \setminus F} P_{\nu}(x,y).\]

In the absence of a natural analogue of the $L^2$ space associated to general topological groups, one may instead focus on the return probabilities of the considered random walks to formulate a generalization of Kesten's theorem in this context. Therefore, we consider the following two properties as extensions of Kesten's theorem to the general setting of topological groups.

\begin{definition}\label{def: Kesten property} We say that a topological group $G$ has the \begin{itemize}
    \item[---\,] \emph{Kesten property} if, for every open $U \in \mathcal{U}(G)$ and every symmetric regular Borel probability measure $\nu$ on $G$ with countable support, \begin{displaymath}
            \qquad \limsup\nolimits_{n \rightarrow \infty}\nu^{n}(U^n)^{1/n} \, = \, 1 ,
        \end{displaymath}
    \item[---\,] \emph{strong Kesten property} if, for every open $U \in \mathcal{U}(G)$ and every symmetric regular Borel probability measure $\nu$ on $G$ with countable support, \begin{displaymath}
            \qquad \limsup\nolimits_{n \rightarrow \infty}\nu^{n}(U)^{1/n} \, = \, 1 .   
        \end{displaymath}
\end{itemize} \end{definition}

In other words, a topological group $G$ has the Kesten property, respectively the strong Kesten property, if and only if, for every open $U \in \mathcal{U}(G)$ and every random walk $(X_{n})_{n \in \mathbb{N}}$ started at the identity in $G$ and governed by a symmetric regular Borel probability measure on $G$ with countable support, \begin{displaymath}
    \limsup\nolimits_{n \rightarrow \infty}\mathbb{P}(X_n \in U^n)^{1/n} \, = \, 1 ,
\end{displaymath} respectively \begin{displaymath}
    \limsup\nolimits_{n \rightarrow \infty}\mathbb{P}(X_n \in U)^{1/n} \, = \, 1 .   
\end{displaymath}

\begin{remark}\label{remark:subgroups} If a topological group $G$ has the strong Kesten property, then so does every topological subgroup of $G$. \end{remark}

The following result, essentially a consequence of~\cite{BeCh-74a}, \cite{BeCh-74b}, and \cite{Q}, clarifies the connection between amenability and the (strong) Kesten property in the locally compact setting.

\begin{proposition}\label{proposition:kesten.lc} Let $G$ be a locally compact group. The following are equivalent. \begin{enumerate}
    \item[$(1)$]\label{proposition:kesten.lc.1} $G$ is amenable.
    \item[$(2)$]\label{proposition:kesten.lc.2} $G$ has the Kesten property.
    \item[$(3)$]\label{proposition:kesten.lc.3} $G$ has the strong Kesten property.
\end{enumerate} \end{proposition}

\begin{proof} (1)$\Longrightarrow$(3): Let $G$ be an amenable locally compact group and let $\lambda$ be a left Haar measure on $G$. If $\nu$ is a symmetric measure with countable support, then the discussion on~\cite[p.~1]{BeCh-74a}  (also see~\cite[p.~48]{greenleaf}) implies that the norm of the Markov operator $M_{\nu}$ on $ L^2 (G, \lambda)$ is equal to $1$, and it is equal to $\limsup\nolimits_{n \rightarrow \infty} \nu^n (V)^{1/n} $ for any compact neighborhood $V$ of the identity by the results of \cite{BeCh-74b}. Hence, amenable locally compact topological groups satisfy the strong Kesten property.

(3)$\Longrightarrow$(2): Trivial.

(2)$\Longrightarrow$(1): We argue by contradiction. Assume that $G$ is a non-amenable group which satisfies the Kesten property. Then, one can find a compact subset $K$ of $G$ that contains some open neighborhood $V \in \mathcal{U}(G)$ such that the subgroup $H$ generated by $K$ is also non-amenable. Since $H$ is non-amenable, it contains a finite symmetric set $S$, such that the group generated by $S$ is not amenable as a discrete group. Moreover, we may also assume that $1 \in S$. Let $\nu$ be the uniform probability measure on $S$, then $\nu$ satisfies the assumptions of \cite[Theorem 1]{BeCh-74a}. An application of \cite[Corollary~7.3]{Q}\footnote{For the reader's convenience, we also include the proof of a slightly modified version of~\cite[Corollary~7.3]{Q} in the appendix of this article, see Section~\ref{lcsc Kesten}.} (see also~\cite[Remark~7.4]{Q})  to $H$, $K$ and $\nu$ provides us with $\epsilon >0$ and $\alpha>0$  such that the inequality $\nu^{n}(K^{\lfloor \epsilon n\rfloor}) \leq e^{-\alpha n}$ holds for all $n \geq 0$. Thus, the measure $\nu$ together with any open neighborhood $U \in \mathcal{U}(G)$ such that $U^{\left\lceil \epsilon^{-1}+1\right\rceil} \subseteq V$ witness the failure of the Kesten property for $G$, which completes the proof. \end{proof}

Beyond the realm of locally compact groups, the connection between amenability and the (strong) Kesten property appears to be more subtle, and there is no universal implication in either direction, as explained by the subsequent observations.

\begin{corollary}\label{corollary:kesten.lc} Every discrete topological subgroup of a topological group with the strong Kesten property is amenable. \end{corollary}

\begin{proof} Let $G$ be a topological group with the strong Kesten property. If $H$ is a discrete subgroup of $G$, then $H$ has the strong Kesten property by Remark~\ref{remark:subgroups} and thus must be amenable by Proposition~\ref{proposition:kesten.lc}. \end{proof}

\begin{corollary}\label{corollary:kesten.lc.free} If a topological group contains a discrete topological subgroup isomorphic to $F_{2}$, then it does not have the strong Kesten property. \end{corollary}

\begin{proof} This is a direct consequence of Corollary~\ref{corollary:kesten.lc} and non-amenability of $F_{2}$. \end{proof}

\begin{examples}\label{examples:non.kesten} Corollary~\ref{corollary:kesten.lc.free} implies that there are many (even extremely) amenable topological groups without the strong Kesten property. For instance, \begin{itemize}
    \item[$(1)$] the unitary group $\U(H)$ of unitary operators on an infinite-dimensional separable Hilbert space $H$, equipped with the strong operator topology, is extremely amenable by a result of Gromov and Milman~\cite{GM}, and $F_{2}$ is isomorphic to a discrete topological subgroup of $\U(F_{2}) \cong \U(H)$ via its left regular representation,
    \item[$(2)$] the group $\Aut(\mathbb{Q},<)$ of order-preserving automorphisms of the naturally ordered set of rational numbers, endowed with the topology of point-wise convergence induced by the discrete topology on $\mathbb{Q}$, is extremely amenable by work of Pestov~\cite{P_98} and, moreover, contains a discrete topological subgroup isomorphic to $F_{2}$ (see, e.g.,~\cite[Proof of Proposition~9.6]{JS} for details),
    \item[$(3)$] by work of Carderi and Thom~\cite{CT}, the metric completion of the inductive limit of diagonally embedded special linear groups over a finite field $F$, with respect to the joint extension of the normalized rank distances, is extremely amenable and, if $\cha(F) \ne 2$, contains a discrete topological subgroup isomorphic~to~$F_{2}$.
\end{itemize} \end{examples}

For further discussion, let us agree on the following convenient terminology.

\begin{definition} A topological group $G$ will be called \begin{itemize}
    \item[---\,] \emph{locally generated} if, for every $U \in \mathcal{U}(G)$, the subgroup of $G$ generated by $U$ coincides with $G$ (or equivalently, if $G$ does not have any proper open subgroup),
    \item[---\,] \emph{boundedly locally generated} if, for every $U \in \mathcal{U}(G)$, there exists a natural number $n$ such that $G=U^{n}$.
\end{itemize} \end{definition}

Note that every connected topological group is locally generated and that non-trivial non-archimedean groups are never locally generated.

\begin{remark}\label{remark:blg.implies.kesten} Every boundedly locally generated topological group has the Kesten property. \end{remark}

In general, the Kesten property does not imply amenability, even for topological groups with SIN. In view of Remark~\ref{remark:blg.implies.kesten}, this is witnessed by the following examples of non-amenable, boundedly locally generated topological groups with SIN.

\begin{examples}\label{examples:groups} \begin{enumerate}
    \item[$(1)$] The group $L^{0}(\mu,G)$ of all $\mu$-equivalence classes of measurable maps from a non-atomic standard probability space $(X,\mu)$ to a Polish group $G$, endowed with the topology of convergence in measure, is a Polish group~\cite[Proposition~7]{Moore}, has SIN if $G$ has SIN, is boundedly locally generated (see~\cite[Lemma~3.8]{SchneiderSolecki} for an even stronger property), and is non-amenable if $G$ is non-amenable~\cite[Theorem~1.1]{PS}. So, the discrete group $G = F_{2}$ would be a suitable choice to produce an example of the kind requested.
    \item[$(2)$] The full group $[R]$ of a countable Borel equivalence relation $R$ equipped with the uniform distance corresponding to a non-atomic invariant ergodic measure is a Polish topological group with SIN~\cite{K}, which is boundedly locally generated (as follows from~\cite[Theorem 3.3]{K}), and non-amenable if and only if $R$ is non-amenable~\cite{GP}.
    \item[$(3)$] The unit group $\GL(R)$ of a non-discrete irreducible, continuous ring $R$ (in von Neumann's sense~\cite{VonNeumannBook}), furnished with the topology induced by the rank metric, is a topological group with SIN, which is Polish if $R$ is separable, and moreover boundedly locally generated (see~\cite[Theorem~6.4]{Schneider25} for an even stronger property). For the concrete example of $R$ being the ring of densely defined, closed linear operators affiliated with the group von Neumann algebra of $F_{2}$, the topological group $\GL(R)$ is known to be non-amenable~\cite{SchneiderIMRN}.
\end{enumerate} \end{examples} 

In the other direction, we obtain the following result, using a characterization of amenability for topological groups given in~\cite[Theorem~5.3]{ST_F}.

\begin{theorem}\label{SINKesten} Every amenable topological group with SIN has the Kesten property. \end{theorem}

\begin{proof} Let $G$ be an amenable topological group with SIN. Consider any symmetric probability measure $\nu$ on $G$ with countable support. Let $U \in \mathcal{U}(G)$ be open. Since $G$ has SIN, we may assume that $U$ is invariant under conjugation and inversion. According to~\cite[Theorem~5.3, (1)$\Leftrightarrow$(2)]{ST_F}, there exists $\alpha \colon G \to \Sym (G)$ such that the action of the group generated by $\alpha(G)$ on the set $G$ is amenable\footnote{This refers to the usual definition of amenability for group actions on sets.} and \begin{equation}\tag{$\ast$}\label{eq:approximation}
    \forall g,h \in G \colon \quad \alpha(g)(h) \in Ugh .
\end{equation} For any $g,h \in G$, it follows that \begin{displaymath}
    h \, = \, \alpha(g)(\alpha(g)^{-1}(h)) \, \stackrel{\eqref{eq:approximation}}{\in} \, Ug\alpha(g)^{-1}(h)
\end{displaymath} and thus, by symmetry and conjugation-invariance of $U$, \begin{displaymath}
    \alpha(g)^{-1}(h) \, \in \, Ug^{-1}h .
\end{displaymath} This shows that \begin{equation}\tag{$\ast\ast$}\label{eq:approximation.2}
    \forall \epsilon \in \{ 1,-1 \} \ \forall g,h \in G \colon \quad \alpha(g)^{\epsilon}(h) \in Ug^{\epsilon}h .
\end{equation}

In order to obtain a lower bound on $\limsup_{n\to \infty}\nu^{n}(U^n)$ we are going to compare the random walk on $G$ driven by $\nu$ with a suitable random walk on $\Sym(G)$ defined below. For this purpose, first  we choose a set $S \subseteq G$ such that $S \cap S^{-1} =\{g \in S \mid g=g^{-1}\}$ and $S \cup S^{-1}=\supp(\nu)$. Next, we consider the formal disjoint union \begin{displaymath}
    S \sqcup S^{-1} \, := \, \{ (0,s) \mid s \in S \} \cupdot \{ (1,s) \mid s \in S^{-1} \} .
\end{displaymath} Note that, for any $s \in S \cup S^{-1}$, \begin{displaymath}
    \{ (0,s), (1,s) \} \subseteq S \sqcup S^{-1}\! \quad \Longleftrightarrow \quad s = s^{-1} .
\end{displaymath} Then $\nu$ naturally induces a probability measure $\nu_1$ on $S \sqcup S^{-1}$ defined as \begin{displaymath}
    \nu_1(i,s) \, := \, \begin{cases}
        \, \tfrac{\nu(s)}{2} & \text{if } s = s^{-1}, \\
        \, \nu(s) & \text{otherwise}.
    \end{cases}
\end{displaymath} Considering the map $\pi \colon S \sqcup S^{-1} \to G, \, (i,s) \mapsto s$, we notice that $\pi_{\ast}(\nu_{1}) = \nu$. Now, let $\Gamma$ be the subgroup of $\Sym(G)$ generated by $\alpha(S)\cup \alpha(S)^{-1}$. For $t=(i,s) \in S \sqcup S^{-1}$, we define \begin{displaymath}
    \Gamma \, \ni \, \alpha_{t} \, := \, \begin{cases}
        \, \alpha(s) & \text{if } i=0, \\
        \, \alpha(s^{-1})^{-1} & \text{if } i=1 .
    \end{cases}
\end{displaymath} We obtain a symmetric probability measure $\nu'$ on $\Gamma$ by setting  \begin{displaymath}
    \left. \nu'(h) \, := \, \nu_1\!\left( \left\{t \in S \sqcup S^{-1} \, \right\vert \alpha_t=h \right\}\right) \qquad (h \in \Gamma).
\end{displaymath} Equivalently, $\nu'$ is the push-forward measure of $\nu_{1}$ along the map \begin{displaymath}
    S \sqcup S^{-1}\! \, \longrightarrow \, \Gamma, \quad t \, \longmapsto \, \alpha_{t} .
\end{displaymath}
 
Now we are ready to show that \begin{equation}\tag{$\ast\!\ast\! \ast$}\label{eq:lower.bound}
    \forall n \in \mathbb{N} \colon \qquad \sup\nolimits_{x \in G} (\nu')^{n}(\{ g \in \Gamma \mid gx = x\}) \, \leq \, \nu^{n}(U^n) .
\end{equation} To this end, let $n \in \mathbb{N}$ and $x \in G$. Define \begin{align*}
    R_n \, &:= \, \{ (t_n,\ldots,t_1) \in (S\sqcup S^{-1})^n \mid \pi(t_n) \cdots \pi(t_1)  \in U^n \} , \\
    R_n'(x) \, &:= \, \{ (t_n,\ldots,t_1) \in (S\sqcup S^{-1})^n \mid (\alpha_{t_n} \circ \ldots \circ \alpha_{t_1})(x) = x \} .
\end{align*} Note that, if $(t_n,\ldots,t_1) \in R_n'(x)$, then $(\alpha_{t_n} \circ \ldots \circ \alpha_{t_1})(x) = x$ and so by~\eqref{eq:approximation.2} there exist $u_1,\ldots, u_n \in U$ such that \begin{displaymath}
    u_n\pi(t_n)u_{n-1}\pi(t_{n-1})\cdots u_1\pi(t_1) x\, = \, x,
\end{displaymath} which, upon multiplying by $x^{-1}$ from the right, yields that \begin{displaymath}
    u_n\pi(t_n)u_{n-1}\pi(t_{n-1})\cdots u_1\pi(t_1) \, = \, 1,
\end{displaymath} thus, thanks to invariance of $U$ under inversion and conjugation, \begin{displaymath}
    \pi(t_n)\cdots \pi(t_1) \, = \, \prod\nolimits_{i=1}^{n}(u_i^{-1})^{\pi(t_{i+1}) \cdots \pi(t_n)} \, \in \, U^n
\end{displaymath} and therefore $(t_n,\ldots,t_1) \in R_n$. This shows that $R_{n}'(x) \subseteq R_{n}$. Combining this with the definitions of $\nu_{1}$ and $\nu'$ as well as the fact that $\pi_{\ast}(\nu_{1}) = \nu$, we see that \begin{align*}
    (\nu')^{n}(\{ g \in \Gamma \mid gx = x\}) \, &= \, \sum\nolimits_{ (t_n,\ldots,t_1)\in R_n'(x)} \nu_1(t_n) \cdots \nu_1(t_1) \\
    &\leq \, \sum\nolimits_{ (t_n,\ldots,t_1)\in R_n} \nu_1(t_n) \cdots \nu_1(t_1) \, = \, \nu^{n}(U^n) .
\end{align*} This proves~\eqref{eq:lower.bound}. Based on this, it remains to establish the desired lower bound on \begin{displaymath}
    \limsup\nolimits_{n \rightarrow \infty}\left(\sup\nolimits_{x \in G} (\nu')^n(\{g \in \Gamma \mid gx=x\})\right)^{1/n},
\end{displaymath} as we will do using the amenability of the action of $\Gamma$ on $G$.
        
Since the action of $\Gamma$ on $G$ is amenable, for any $\epsilon >0$ and any finite subset $E \subseteq \Gamma$, we find an \emph{$(E, \epsilon)$-F\o lner set}, i.e., a finite non-empty subset $F \subseteq G$ such that \begin{displaymath}
    \sum\nolimits_{g \in E}\lvert gF \triangle F\rvert \, \leq \, \epsilon \lvert F \rvert .
\end{displaymath} It is easy to see that, if $F \subseteq G$ is an $(E, \epsilon)$-F\o lner, then the intersection of $F$ with at least one of the orbits of the action of $\Gamma$ on $G$ must also be an $(E, \epsilon)$-Følner set. Indeed, let $F_1, \ldots, F_k$ be the full list of the intersections of $F$ with the orbits of the action of $\Gamma$. Then, the sets $F_1, \ldots, F_k$ partition $F$, and for any $g \in E$ and $i \neq j$ we have $gF_i \cap F_j = \emptyset$. Therefore, since $F$ is an $(E, \epsilon)$-Følner set,  \begin{displaymath}
    \sum\nolimits_{g \in E}|gF \triangle F| \, = \, \sum\nolimits_{i=1}^{k} \sum\nolimits_{g \in E}|gF_i \triangle F_i| \, \leq \, \epsilon |F| \, = \, \epsilon \sum\nolimits_{i=1}^{k}|F_i| .
\end{displaymath} But this implies that for some $i \leq k$ we have
\[ \sum\nolimits_{g \in E}|gF_i \triangle F_i| \, \leq \, \epsilon |F_i|,\]
so $F_i$ is an $(E, \epsilon)$-F\o lner set as well.
Thus, we may always assume that the points  in the $(E, \epsilon)$-Følner set belong to one orbit of the action of $\Gamma$ on $G$ (but the orbit may depend on the choice of $E$ and $\epsilon$).

Now, let us fix some $\epsilon >0$ and choose a finite symmetric subset $E \subseteq \Gamma$ such that $\nu'(\Gamma \setminus E) \leq \epsilon/2 $. Let $F \subseteq G$ be an $(E, \epsilon/2)$-Følner set contained in some orbit $O$ of the action of $\Gamma$ on $G$. The $\nu'$-random walk on $O$ naturally induces a connected network on $O$ where the conductances are given by the transition probabilities.\footnote{We follow the terminology of \cite[Section 6.1 and 6.2]{LP}. The edge boundary and isoperimetric constant are defined in~\cite[Section 6.1, p.~177]{LP}. In that text, Mohar's inequality is stated for infinite networks, but it is also true for finite networks for trivial reasons.} In this network, the edge boundary of $F$ has the total weight at most $|F|(\epsilon/2+\epsilon/2)=|F|\epsilon$. Hence, the edge-isoperimetric ratio\footnote{In a network corresponding to a random walk, the edge-isoperimetric ratio of a finite set $F$ is equal to the weight of its edge boundary divided by $|F|$.}  of $F$ does not exceed $\epsilon$. As a result, the isoperimetric constant of the $\nu'$-random walk on $O$ does not exceed $\epsilon.$ Since $\epsilon$ was an arbitrary positive number, we conclude that
the infimum of isoperimetric constants of $\nu'$-random walks on $\Gamma$-orbits on $G$ is equal to $0$. Then Mohar's isoperimetric inequality (see Theorem~\ref{Mohar} or~\cite[Theorem~6.7]{LP}) implies that the supremum of the spectral radii of $\nu'$-random walks on $\Gamma$-orbits on $G$ is equal to $1$, hence \begin{displaymath}
    \limsup\nolimits_{n \rightarrow \infty}\left(\sup\nolimits_{x \in G} (\nu')^n(\{g \in \Gamma \mid gx=x\})\right)^{1/n} \, = \, 1,
\end{displaymath} which by~\eqref{eq:lower.bound} entails that \begin{displaymath}
    \limsup\nolimits_{n \rightarrow \infty} \nu^n (U^n)^{1/n} \, = \, 1. \qedhere
\end{displaymath} \end{proof}

One may wonder whether in Theorem~\ref{SINKesten} the SIN property may be dropped from the list of assumptions in exchange for the requirement of some other property, such as local generation. However, in Corollary~\ref{counterexample_K}, we provide an example of an amenable contractible Polish group without Kesten's property.
    
As mentioned in Remark~\ref{remark:blg.implies.kesten}, the Kesten property is a consequence of bounded local generation. We note that there exist topological groups that satisfy the hypotheses of Theorem~\ref{SINKesten}, but are not boundedly locally generated. 

\begin{example}[{\cite[Example~3.5]{P_17}}] The group $\U(\infty)_2$ of all unitary operators on $H = \ell^2(\mathbb{N})$ of the form $\mathrm{Id}_{H}+T$, where $T$ is a Hilbert--Schmidt operator on $H$, equipped with the topology defined by the Hilbert--Schmidt metric, constitutes a Polish topological group with SIN, which is even extremely amenable~\cite{GM} (see also~\cite[Corollary~4.1.13]{P_06}), but not bounded in the sense of~\cite{R} and thus not boundedly locally generated. \end{example}

For topological groups with SIN, amenability is equivalent to \emph{skew-amenability} as defined in~\cite{VP2020,JS}. The amenable topological groups without Kesten's property discussed in Example~\ref{examples:non.kesten}(1) and Example~\ref{examples:non.kesten}(3) are not skew-amenable, according to~\cite[example after
Proposition~3]{VP2020} and~\cite[Proposition~9.6]{JS}. This leads us to the following question.

\begin{question} Do all skew-amenable topological groups have the Kesten property? \end{question}

\section{Extensive amenability and inverted orbits}\label{section:extensive.amenability}

This section contains a brief overview of extensive amenability and inverted orbits as well as a discussion of questions connecting these topics with the amenability of equivalence relations. We do not claim any  significant new results in this section, and its main aim is to provide additional background, context, and motivation for the discussion in Section \ref{section:lamplighters}.

Extensive amenability is a strengthening of the notion of amenability of a group action. This property was formally defined in \cite{JMBMdlS}, but it was implicitly used as an important tool in the solutions of amenability problems of several classes of groups in \cite{JM} and in \cite{JNdlS}. Amenability problems for the group of the interval exchange transformations and for Thompson's group $F$ can also be reduced to a verification of extensive amenability of specific actions of these groups (see \cite{JMBMdlS} and \cite{JS}).
	
In order to give a definition of extensive amenability, we will use following auxiliary constructions. Let $G$ be a group $G$ acting on a set $X$. Denote by $\mathcal{P}_f(X)$ the group of all finite subsets of $X$ with the multiplication defined as the symmetric difference. It is easy to see that $\mathcal{P}_f(X)$ is isomorphic to the direct sum $\bigoplus_{x\in X} \mathbb{Z}_2$. The action of $G$ on $X$ naturally extends to an action of $G$ on $\mathcal{P}_f(X)$, and if we view $\mathcal{P}_f(X)$ as $\bigoplus_{x\in X} \mathbb{Z}_2$ the corresponding action can be defined as \begin{displaymath} 
    gw(x) \, := \, w(g^{-1}x) \qquad ( g \in G, \, w \in \mathcal{P}_f(X), \, x \in X ).
\end{displaymath} This action allows us to define the semidirect product $\mathcal{P}_f(X)\rtimes G$.  Moreover, the coset space $(\mathcal{P}_f(X)\rtimes G)/G$ is naturally isomorphic to $\mathcal{P}_f(X)$, and we call the action $\mathcal{P}_f(X)\rtimes G$ on $(\mathcal{P}_f(X)\rtimes G)/G$ the affine action of  $\mathcal{P}_f(X)\rtimes G$ on $\mathcal{P}_f(X)$. More explicitly, \begin{displaymath}
    (E,g)F \, = \, E \triangle gF \qquad (g \in G, \, E,F \in \mathcal{P}_f(X)). 
\end{displaymath}

\begin{definition}[\cite{JMBMdlS}] An action of a group $G$ on a set $X$  is called \emph{extensively amenable} if the induced affine action of $\mathcal{P}_f(X)\rtimes G$ on $\mathcal{P}_f(X)$ is amenable. \end{definition} 

We refer the reader to~\cite[Chapter~5]{J} and~\cite[Section~2]{JMBMdlS} for several equivalent definitions of extensive amenability.

It is not difficult to prove that extensive amenability of an action on a non-empty set implies its amenability. The definition of extensive amenability also implies that any action of an amenable group is extensively amenable. Moreover, any transitive action of a finitely generated group with a recurrent orbital Schreier graph is extensively amenable (see \cite[Theorem 1.2]{JNdlS}). Recurrent actions have been the main source of non-trivial examples of extensively amenable actions, however, it is still unknown if there is an action with  uniformly subexponential growth of orbital Schreier graphs which fails to be extensively amenable. 
	
Extensive amenability of actions is preserved under taking direct products (see \cite[Corollary~2.6]{JMBMdlS}), direct limits (\cite[Proposition~2.2]{JMBMdlS}), and extensions (see ~\cite[Proposition~ 2.4]{JMBMdlS}. Moreover, extensively amenable action is also hereditarily amenable (see~\cite[Corollary~2.3]{JMBMdlS}). However, the converse is not true, as shown in~\cite[Section~6]{JMBMdlS}.  

Let us describe a probabilistic characterization of extensive amenability in terms of the inverted orbits of group actions.
	
\begin{definition}\label{invorb.Def} Let $G$ be a group acting on a set $X$. For a sequence $h=(h_1,\ldots,h_n)$ of elements of $G$ and a point $x \in X$, the \emph{inverted orbit} of $x$ under $h$ is defined as the set $O_h(x) := \{x, h_nx, h_nh_{n-1}x, \ldots, h_nh_{n-1}\cdots h_1x \}$. \end{definition}

In practice, the sequence $h$ from the definition above often represents the sequence of increments of a random walk. The name ``inverted orbit'' is related to the following interpretation of these sets. If $h$ defines a sequence of positions $g_k$, $k=1,\ldots,n $, of a left random walk on $G$ given by $g_0=1$, $g_k=h_kg_{k-1}$, $k=1,\ldots,n $, then the inverted orbit of a point $x \in X$ can be defined as the set $\{x, g_1^{-1}x,\ldots, g_n^{-1}x\} $. If one replaces elements of $h$ by their inverses in Definition~\ref{invorb.Def}, the resulting set would be exactly the one described in the previous sentence. However, Definition~\ref{invorb.Def} is more convenient when one works with lamplighter groups, and thus we will use it. Moreover, when $h$ is sampled according to a symmetric probability measure on $G$, for every point $x \in X$  the distributions of the inverted orbits for these two definitions coincide.
	
As shown in \cite[Proposition 4.1]{JMBMdlS}, behavior of inverted orbits characterizes extensive amenability of actions. We restate part of this result for reader's convenience below. The notation is as follows. For a point $x\in X$ we will denote by $O_n(x)$ the inverted orbit of $x$ under the first $n$ steps (increments) of a symmetric random walk on $G$, so formally $O_n(x)$ is a random subset of $X$. Finally, the expectation and probability are computed with respect to the random walk on $G$.
	
\begin{proposition}[{\cite[Proposition 4.1]{JMBMdlS}}]\label{inv_orb_char} Let $G$ be a finitely generated group acting transitively on a set $X$, and let $\mu$ be a finitely supported symmetric non-degenerate probability measure $\nu$ on $G$. Consider corresponding random walks on $G$ and $X$. \begin{enumerate}
	\item[$(1)$] The action of $G$ on $X$ is extensively amenable if and only if, for every $x \in X$,  
			$$ \qquad \lim\nolimits_{n \rightarrow \infty} -\tfrac{1}{n}  \log \mathbb{E}2^{-|O_n(x)|} \, = \, 0 .$$
	\item[$(2)$] The action of $G$ on $X$ is extensively amenable if and only if, for every $ x \in X$ and every $\epsilon \in \mathbb{R}_{>0}$, $$ \qquad \limsup \nolimits_{n \rightarrow \infty} \mathbb{P}(|O_n(x)| \leq \epsilon n)^{1/n} \, = \, 1.$$
\end{enumerate} \end{proposition}

The idea behind the proof of~(1) given in~\cite{JMBMdlS} is to express $\mathbb{E}2^{-|O_n(x)|}$ as a return  probability for a random walk on $\mathcal{P}_f(X)$ induced by a switch-walk-switch random walk on $\mathcal{P}_f(X)\rtimes G$ and then apply Kesten's criterion for amenability of actions. The statement of~(2) is equivalent to the statement of~\cite[Proposition~4.1(2)]{JMBMdlS}.
	
\begin{remark}\label{remark:limit_for_inv_orbits} Let a countable group $G$ act on a set $X$.  Consider any symmetric random walk on $G$ and fix arbitrary $x \in X$. Then, for every $\epsilon >0$, the function $ n \mapsto \mathbb{P} (|O_n(x)| \leq  \epsilon n)$ is supermultiplicative, and the following limit exists 
	 $$p_{\epsilon}(x) \, := \, \lim\nolimits_{n \rightarrow \infty} \mathbb{P}(|O_n(x)| \leq \epsilon n)^{1/n} .$$
Indeed, for any two sequences of increments $h \in G^{n}, t \in G^{m}$, let us denote by $(h,t)$ their concatenation. Then, for  every point $x \in X$, we have $$|O_{(h,t)}(x)| \, = \, |O_{t}(x) \cup t_m\cdots t_1O_{h}(x) | \, \leq \, |O_{h}(x)|+ |O_{t}(x)|.$$ As a result, for any symmetric random walk on $G$ and any $m,n \in \mathbb{N}$, we have \begin{displaymath}
    \mathbb{P}(|O_{n+m}(x)| \leq \epsilon (n+m)) \, \geq \, \mathbb{P}(|O_n(x)| \leq \epsilon n)\mathbb{P}(|O_m(x)| \leq \epsilon m) .
\end{displaymath} Hence, the function $ n \mapsto \mathbb{P} (|O_n(x)| \leq  \epsilon n)$ is supermultiplicative, so Fekete's lemma implies that the limit $p_{\epsilon}(x)= \lim\nolimits_{n \rightarrow \infty } {\mathbb{P} (|O_n(x)| \leq  \epsilon n)}^{1/n} $ exists.  Moreover, we also have $p_{\epsilon}(x)= \sup_{n \in \mathbb{N}}{\mathbb{P} (|O_n(x)| \leq  \epsilon n)}^{1/n}$. In particular, this implies that $$\mathbb{P} (|O_n(x)| \leq \epsilon n) \, \leq \, p_{\epsilon}(x)^n$$ for every $n\in \mathbb{N}$. \end{remark} 

One may wonder if amenability of an equivalence relation generated by a measure-preserving action of a group on a standard probability space has implications for extensive amenability of the actions of this group on the orbits of individual points.
 
In particular, an affirmative answer to the following question would provide the first non-elementary family of extensively amenable actions of groups with non-recurrent orbital Schreier graphs. It would also bring us close to proving amenability of the $\IET$ group (see~\cite[Lemma~2.2 and Proposition~5.3]{JMBMdlS}).  

\begin{question}\label{ExtAm} Let $G$ be a countable group acting on a standard measure space $(X,\mu)$ in a Borel way such that $\mu$ is invariant and the induced orbit equivalence relation is $\mu$-amenable. Is the action of $G$ on almost every orbit extensively amenable? \end{question}

Even though an affirmative answer would imply that actions on the orbits exhibit a rather strong property, there is some evidence supporting this conjecture, and we list several relevant facts below.

\begin{enumerate}
    \item A variety of works studying invariant random subgroups (IRS) of discrete groups show that for an ergodic probability measure preserving action of a group $G$ on a standard measure space $(X, \mu)$, for almost every $x \in X$, the random walks on the orbital Schreier graph of $x$ often have properties similar to the random walks on the Cayley graphs of quotients of $G$ modulo its normal subgroups. For example, see \cite[Theorem~3]{AGV} for the results concerning the spectral radius, and \cite[Theorem~3.1]{B14} for the definition of the asymptotic (average) entropy in this setting. Since extensive amenability is characterized in terms of random walks, a positive answer to Question \ref{ExtAm} appears possible.
    \item It is well-known that in the setting of~(1), amenability of the orbit equivalence relation generated by $G$ implies that for almost every $x \in X$, the action of $G$ on the orbit of $x$ has properties stronger than amenability, and in particular, it is hereditarily amenable, see for example \cite{Kaimanovich97} or \cite[Chapter~9]{KM}.
    \item The operations preserving amenability of equivalence relations with respect to measures, such as taking direct products or directed unions, also preserve extensive amenability of group actions, see~\cite[Section~2]{JMBMdlS}.
 \end{enumerate}

One may also wonder if the conditions from Proposition~\ref{inv_orb_char} hold on average for a group action on a standard measure space with amenable orbit equivalence relation. To simplify the notation, let us say that a sequence $(a_n)_{n \geq 1}$ in $[0,1]$ \emph{decays subexponentially} if \begin{displaymath}
    \limsup\nolimits_{n \rightarrow \infty} a_{n}^{1/n}\, = \, 1 .
\end{displaymath}

\begin{question}\label{large_avareges} Let $R$ be a countable Borel equivalence relation on a standard measure space $(X,\mu)$, such that $\mu$ is ergodic and invariant with respect to $R$ and $R$ is $\mu$-amenable. Let $G$ be a countable subgroup of $[R]$, and consider a symmetric probability measure $\nu$ on $G$. For $x \in X$, let $O_n(x)$ denote the inverted orbit of $x$ under the first $n$ steps of the random walk on $G$ driven by $\nu$. Under these conditions, does $\int_X\mathbb{E}2^{-|O_n(x)|} \,\mathrm{d}\mu(x)$ decay subexponentially, and does $\int_X \mathbb{P}(|O_n(x)| \leq \epsilon n) \,\mathrm{d}\mu(x)$ decay subexponentially for every $\epsilon > 0$? \end{question}
	
\section{Measurable lamplighter groups} \label{section:lamplighters}
	
In this section, we construct an analogue of the lamplighter group in the measurable setting and show how Kesten's property for this group is related to the behavior of the inverted orbits of points for the actions of discrete groups. In particular, Theorem~\ref{inv.orb.T}  and Theorem~\ref{Top.Lamplighter} allow us to construct an example of an amenable, locally generated (in fact, even contractible) Polish group without Kesten's property (Corollary~\ref{counterexample_K}). On the other hand, it is unknown if Kesten's property holds for the measurable lamplighter corresponding to the ergodic amenable probability measure preserving relation, and if it holds, then  
 Theorem~\ref{inv.orb.T} implies that  Question~\ref{large_avareges} has an affirmative answer.

Let $R$ be a countable Borel equivalence relation on standard measure space $(X,\mu)$ such that $\mu$ is $R$-quasi-invariant. We equip $R$ with the Borel structure inherited from $X \times X$. Then the measure $\mu_{R}$ on the algebra of Borel subsets of $R$ is defined by \begin{displaymath}
    \mu_{R}(A) \, := \, \int_X|A_x|\,\mathrm{d}\mu(x) ,
\end{displaymath}  where \begin{displaymath}
    A_x \, := \, \{y \in X \mid (x,y) \in A\}.
\end{displaymath} One readily checks that the set of all Borel subsets $A \subseteq R$ with $\mu_{R}(A) < \infty$ constitutes an abelian group with respect to symmetric difference. We consider the quotient group \begin{displaymath}
    \A_{\mu}(R) \, := \, \{ [A]_{\mu_{R}} \mid A \subseteq R \text{ Borel}, \, \mu_{R}(A) < \infty \}
\end{displaymath} of the corresponding $\mu_{R}$-equivalence classes.\footnote{As is customary, we will not distinguish in notation between sets and their equivalence classes.} To simplify the notation, we will denote the group operation on $\A_{\mu}(R)$ by ``$+$'', and the identity in this group is denoted by $\emptyset$.
Note that $\A_{\mu}(R)$ admits a natural embedding into $L^{1}(R,\mu_{R})$ sending an element of $\A_{\mu}(R)$ to the $\mu_{R}$-equivalence class of the indicator function of any concrete representative. We equip $\A_{\mu}(R)$ with the topology inherited from $(L^{1}(R,\mu_{R}),\Vert \cdot \Vert_{1})$ via this embedding, which we call the \emph{$L^{1}$-topology}. To be completely explicit, the $L^{1}$-topology on $\A_{\mu}(R)$ is generated by the metric \begin{displaymath}
    \A_{\mu}(R) \times \A_{\mu}(R) \, \longrightarrow \, \mathbb{R}_{\geq 0}, \quad (A,B) \, \longmapsto \, \mu_{R}(A \triangle B) ,
\end{displaymath} which we will refer to as the \emph{$L^{1}$-distance} on $\A_{\mu}(R)$. Since the $L^{1}$-distance is translation-invariant, $\A_{\mu}(R)$ with the $L^{1}$-topology constitutes a topological group. Notice that the action of the Borel full group $[R]_B$ on $X \times X$ defined by 
$$ g(x,y) \, := \, (x,gy),$$ 
induces an action of $[R]$ on $\A_{\mu}(R)$ by isometries.\footnote{Since transformations in $[R]$ are defined modulo sets of measure $0$, we start the construction with an action of $[R]_B$ on $X \times X$. However, when we consider a countable subgroup $G$ of $[R]$ later, we may always pass to a well-defined action of $G$ on a subset $A \times A \subseteq X \times X$ where $A$ is an $R$-invariant subset of full measure.} Indeed, it is easy to see that this action preserves $R$ and induces a natural action of $[R]_B$ on the Borel subsets of $R$ which also preserves $\mu_{R}$. Next, if $c'$ and $c''$ are any representatives of $c \in \A_{\mu}(R)$ and $g \in [R]_B$, then \begin{displaymath}
    \mu_{R}(gc' \triangle gc'') \, = \, \mu_{R}(g(c' \triangle c'')) \, = \, \mu_{R}(c' \triangle c'') \, = \, 0,
\end{displaymath} so $gc'$ and $gc''$ represent the same element of $\A_{\mu}(R)$ as well. Moreover, if $g, h \in [R]_B$  represent the same element of $[R]$, then the set \[E \, := \, \{x \in X \mid g(x)\neq h(x)\}\] is a Borel subset of $X$ and $\mu(E)=0$. Since $\mu$ is $R$-quasi-invariant, this implies that
\[\mu \left(\{x \in X \mid  R_x \cap E \neq \emptyset \}\right) \, = \, 0.\]  Let $c' \subseteq R$ be any representative of $c \in \A_{\mu}(R)$.  The discussion above implies that
\[\mu \left(\{x \in X \mid  c'_x \cap E \neq \emptyset \}\right) \, = \, 0.\]
However, for any $x \in X$ the condition $c'_x \cap E =\emptyset$ implies that $gc'_x=hc'_x$.  Thus, we have
\[\mu_{R}(gc'\triangle hc') \, = \, \int_X |gc'_x \triangle hc'_x | \, \mathrm{d}\mu(x) \, = \, 0.\]
Therefore, if $g$ and $h$ represent the same element of $[R]$, then $gc'$ and $hc'$ represent the same element of $\A_{\mu}(R)$. Hence, the action of $[R]_B$ on $X \times X$ induces a well-defined action of $[R]$, viewed as an abstract group, on $\A_{\mu}(R)$. The fact that this action preserves $L^{1}$-distance on $\A_{\mu}(R)$ follows immediately from the definitions.

The next theorem shows that the semidirect product induced by the action described above has a natural topological structure.

\begin{theorem} \label{Top.Lamplighter} Let $R$ be a countable Borel equivalence relation on a standard measure space $(X,\mu)$ such that $\mu$ is $R$-quasi-invariant. Let $[R]$ be endowed with the uniform topology and let $\A_{\mu}(R)$ be endowed with the $L^{1}$-topology. Then $\A_{\mu}(R) \rtimes [R]$ with the product topology is a Polish topological group. Moreover, the following hold. \begin{enumerate}
	\item[$(1)$] If $\mu$ is $R$-invariant, then $\A_{\mu}(R) \rtimes [R]$ admits a complete left-invariant metric compatible with its topology, i.e., $\A_{\mu}(R) \rtimes [R]$ is a CLI group\footnote{In the sense of~\cite[Remark after Proposition~3.5, p.~252]{LeMaitre18}}.
    \item[$(2)$] If $R$ is $\mu$-amenable, then the topological group $\A_{\mu}(R) \rtimes[R]$ is amenable.
    \item[$(3)$] If $\mu$ is $R$-ergodic, then $\A_{\mu}(R) \rtimes[R]$ is contractible and does not have SIN.
\end{enumerate}	\end{theorem} 

\begin{proof} Let $d_{[R]}$ denote the uniform distance on $[R]$ and $d_{\A_{\mu}(R)}$ denote the $L^{1}$-distance on $\A_{\mu}(R)$. It is easy to see that that the metric $d$ on $\A_{\mu}(R) \rtimes[R]$ defined by \begin{displaymath}
    d((c_1,g_1), (c_2,g_2)) \, := \, d_{\A_{\mu}(R)}(c_1,c_2)+d_{[R]}(g_1,g_2)
\end{displaymath} generates the product topology on $\A_{\mu}(R) \times [R]$. Since $[R]$ and $\A_{\mu}(R)$ are topological groups, in order to show that $\A_{\mu}(R) \rtimes[R]$ is a topological group we need to verify that the action \begin{displaymath}
    [R] \times \A_{\mu}(R) \, \longrightarrow \, \A_{\mu}(R), \quad (g,c) \, \longmapsto \, gc
\end{displaymath} is continuous (see, e.g.,~\cite[III, \S2.10, Proposition~28, p.~241]{bourbaki}). As $[R]$ acts by isometries on $(\A_{\mu}(R),d_{\A_{\mu}(R)})$, we see that \begin{displaymath}
    d_{\A_{\mu}(R)}(gc,g'c') \leq d_{\A_{\mu}(R)}(gc,g'c)+d_{\A_{\mu}(R)}(g'c,g'c') = d_{\A_{\mu}(R)}(gc,g'c)+d_{\A_{\mu}(R)}(c,c')
\end{displaymath} for all $g,g' \in [R]$ and any $c,c' \in \A_{\mu}(R)$. Thus, it suffices to check that, for any fixed $c \in \A_{\mu}(R)$, the map \begin{displaymath}
    [R] \, \longrightarrow \, \A_{\mu}(R), \quad g \, \longmapsto \, gc
\end{displaymath} is continuous. To this end, fix any $c \in \A_{\mu}(R)$ and $\epsilon_0 >0$. We need to find $\delta>0$ such that for any $g \in [R]$ with $\mu(\supp(g))<\delta$  we have $d_{\A_{\mu}(R)}(c,gc)< \epsilon_0$. 
Recall that for every $g \in [R]$  the definition of $d_{\A_{\mu}(R)}$ implies that
\[d_{\A_{\mu}(R)}(c,gc) \, = \, \int_X |c_x \triangle gc_x| \, \mathrm{d}\mu(x).\]
Notice that, for any $g \in [R]$, we have $|c_x\triangle gc_x| \leq 2|c_x| $ for $\mu$-almost every $x \in X$. Moreover, since $c \in \A_{\mu}(R)$, the function $X \to [0,\infty], \, x \mapsto 2|c_x|$ is $\mu$-integrable. Then, the absolute continuity of the Lebesgue integral implies that there exists $\epsilon >0$ such that, for any Borel $B \subseteq X$ with $\mu(B) < \epsilon$ and any $g \in [R]$, we have \[ \int_B |c_x\triangle gc_x| \, \mathrm{d}\mu(x) \, \leq \, \int_B 2|c_x| \, \mathrm{d}\mu(x) \, < \, \epsilon_0.\]
Let us fix this $\epsilon >0$. By the Feldman--Moore theorem,  see for example, \cite[Theorem 1.3]{KM}, we may assume that $R$ is generated by an action of a countable group $G$. Let $g_1,g_2,g_3,\ldots$ be some enumeration of its elements. Then, there exists $k \in  \mathbb{N}$ such that \begin{displaymath}
    \mu(\{x \in X \mid c_x \subseteq \{g_1(x),\ldots, g_k(x)\}\}) \, > \, 1-\epsilon/2 .
\end{displaymath} Notice that the measure $g_1 \mu + \ldots +g_k \mu $ is absolutely continuous with respect to $\mu$, and thus there exists $\delta >0 $ such that for any Borel $A \subseteq X$ with $\mu(A)< \delta$ we have $(g_1 \mu + \ldots +g_k \mu) (A) <\epsilon/2$. Furthermore, for any $g \in [R]$, the set $\{x \in X \mid c_x \neq gc_x\}$ is contained in \begin{displaymath}
    \{x \in X \mid c_x \nsubseteq \{g_1(x),\ldots, g_k(x)\}\} \cup \{x \in X \mid  \supp(g) \cap \{g_1(x),\ldots, g_k(x)\} \neq \emptyset \} .
\end{displaymath} Thus, if  $\mu(\supp(g)) < \delta$, then \begin{align*}
    &\mu(\{x \in X \mid c_x \neq gc_x\}) \\
    &\qquad \leq \, \mu(\{x \in X \mid c_x \nsubseteq \{g_1(x),\ldots, g_k(x)\}\}) +(g_1 \mu + \ldots + g_k \mu)(\supp(g)) \, < \, \epsilon. 
\end{align*} Choose $\delta >0$ as in the argument above, and consider any $g \in [R]$ with $\mu(\supp(g)) < \delta$. Let $B := \{x \in X \mid c_x \neq gc_x\}$, then we immediately get $\mu(B) < \epsilon$. But now the choice of $\epsilon >0 $ leads to the desired inequality \[d_{\A_{\mu}(R)}(c,gc) \, = \, \int_B|c_x\triangle gc_x| \, \mathrm{d}\mu(x) \, \leq \, \int_B 2|c_x| \, \mathrm{d}\mu(x) \, < \, \epsilon_0. \] Therefore,  $gc \rightarrow c$ as $g \rightarrow 1$ in $[R]$, and the map $g \mapsto gc$ from $[R]$ to $\A_{\mu}(R)$ is indeed continuous for any $c \in \A_{\mu}(R)$. 

Finally, we show that $\A_{\mu}(R) \rtimes [R]$ is Polish. First, the map sending any element $c$ of the group $\A_{\mu}(R)$ to the element of $L^1(R,\mu_{R})$ represented by the indicator function of any representative of $c$ is an isometry between $(\A_{\mu}(R), d_{\A_{\mu}(R)})$ and a closed subset of $(L^1 (R,\mu_{R}),\Vert \cdot \Vert_{1})$, namely the subset of all elements of $L^1 (R,\mu_{R})$ with an essential range in the closed set $\{0,1\}$. Thus, $(\A_{\mu}(R),d_{\A_{\mu}(R)})$ is a separable complete metric space. In particular, the $L^{1}$-topology on $\A_{\mu}(R)$ is Polish. Moreover, the uniform topology on $[R]$ is Polish as well~\cite[Section 5.1, p.~299]{GP}, whence the product topology on $\A_{\mu}(R) \rtimes [R]$ is Polish, too.

(1) The topology on $\A_{\mu}(R)\rtimes [R]$ is generated by the metric $d$, and one readily checks that $d$ is left-invariant. Moreover, as seen above, $(\A_{\mu}(R),d_{\A_{\mu}(R)})$ is a complete metric space. If $\mu$ is $R$-invariant, then $([R],d_{[R]})$ is also a complete metric space (see, e.g.,~\cite[Section~2, p.~921]{D95}), thus $(\A_{\mu}(R)\rtimes[R],d)$ is a complete metric space, too.

(2) This follows immediately from the previous part and the fact that amenability of topological groups is preserved under extensions, see~\cite[Theorem~4.8]{R67}. Indeed, one only needs to verify that $[R]$ is amenable even if $\mu$ is not ergodic. We will provide a standard argument for reader's convenience. Since $R$ is $\mu$-hyperfinite (see~\cite{CFW81}), on an $R$-invariant set of full measure, $R$ may be represented as the union of the increasing sequence of countable Borel equivalence relations $R_n, \, n \geq 1$, such that for every $n \geq 1$ the size of the classes of $R_n$ is bounded by a constant $K_n$.  Recall that a direct product of a family of symmetric groups of a uniformly bounded size is always a locally finite group, and hence it is amenable. Therefore, for each $n \geq 1$, the full group $[R_n]$ is amenable as a discrete group. Since the union of these groups is dense in $[R]$ equipped with the uniform topology, the topological group $[R]$ is amenable.
   
(3) We first prove that $\A_{\mu}(R) \rtimes[R]$ does not have SIN. Fix some small $r>0$ and consider an element $g \in [R]$ such that $\mu(\supp(g))<r$. Since $R$ is ergodic, there exists $c \in \A_{\mu}(R)$ such that $|c_x|=1$, and $c_x \subseteq \supp(g)$ for a.e.~$x \in X$. Then \[d_{\A_{\mu}(R)}(c, gc) \, = \, d_{\A_{\mu}(R)}(c+gc, \emptyset) \, = \, 2. \] Notice that if we conjugate the neighborhood $U=B_d(r)$ (the ball of radius $r$ centered at the identity of $\A_{\mu}(R)\rtimes[R]$) by $h=(c, 1)$, the only elements of $hUh^{-1}$ with the $[R]$-component equal to $g$ are the conjugates of the elements of the form $(s,g) \in U$. However, for any  $(s,g) \in U$, \[h(s,g)h^{-1} \, = \, (c, 1)(s,g)(c,1) \, = \, (c+gc+s,g),\] where $d_{\A_{\mu}(R)}(s, \emptyset) \leq r$. As a result, \[d_{\A_{\mu}(R)}(c+gc+s, \emptyset) \, \geq \, 2-r \, > \, r,\] so $h(s,g)h^{-1}$ does not belong to $U$ for any admissible $s$. Hence, the intersection $U \cap hUh^{-1}$ does not project onto the ball of radius $r$ in $[R]$. Repeating this construction for a sequence of $r_n \rightarrow 0$ shows that the intersection of all conjugates of $U$ does not project onto any ball around the identity in $[R]$, so it is not open in $\A_{\mu}(R) \rtimes[R]$.
    
Next, we show that $\A_{\mu}(R) \rtimes [R]$ is contractible. Since $(X,\mu)$ is a standard measure space and $\mu$ is $R$-ergodic, the full group $[R]$ equipped with the uniform topology is contractible by \cite[Section 2]{D95}. Thus, it suffices to show that $\A_{\mu}(R)$ is contractible, and we will provide an explicit construction of a homotopy between the identity map on $\A_{\mu}(R)$ and the constant mapping $c \mapsto \emptyset$. By the isomorphism theorem for standard measure spaces, see for example~\cite[Section 17.F]{K95}, we may assume that $(X, \mu) =([0,1], \lambda)$, where $\lambda$ is the Lebesgue measure restricted to $[0,1]$. Now, fix any $c\in \A_{\mu}(R)$, and for any $t \in [0,1]$ define $c(t) \in \A_{\mu}(R)$ as follows: choose any representative $c'$ of $c$, and let $c(t) \in \A_{\mu}(R)$ be the class of the set $c'(t) \subseteq R$ defined by \begin{displaymath}
    c'(t) \, := \, c' \cap ([0,t] \times [0,1]) .
\end{displaymath} It is easy to see that $c(t)$ is well-defined for every $t \in [0,1]$, and $c(0)= \emptyset$ and $c(1)=c$.\footnote{Recall that, by a mild abuse of notation, $\emptyset$ also denotes the identity of $\A_{\mu}(R)$.} Moreover, for any $s, t \in [0,1]$ with $s<t$, we have \begin{displaymath}
    d_{\A_{\mu}(R)}(c(s),c(t)) \, = \, \int_s^t|c_x| \, \mathrm{d}\lambda(x) .
\end{displaymath} Since $[0,1] \to [0,\infty], \, x \mapsto |c_x|$ is $\lambda$-integrable, the absolute continuity of the Lebesgue integral implies that the map $[0,1] \to \A_{\mu}(R), \, t \mapsto c(t)$ is continuous (so it is continuous path from $\emptyset$ and $c$ in $\A_{\mu}(R)$). Next, we will show that the map \begin{displaymath}
    h\colon \, \A_{\mu}(R) \times [0,1] \, \longrightarrow \, \A_{\mu}(R), \quad (c,t) \, \longmapsto \, c(t)
\end{displaymath} is continuous, and therefore, it is a homotopy witnessing contractibility of $\A_{\mu}(R)$. Notice that for any $t \in [0,1]$ and any $s,r \in \A_{\mu}(R)$, \[d_{\A_{\mu}(R)}(s(t),r(t)) \, = \, \int_0^t|s_x \triangle r_x | \, \mathrm{d}\lambda(x) \, \leq \, \int_0^1|s_x \triangle r_x | \, \mathrm{d}\lambda(x) \, = \, d_{\A_{\mu}(R)}(s,r).\]
Then, for any $t, t' \in [0,1]$ and any $s,r \in \A_{\mu}(R)$, \begin{align*}
    d_{\A_{\mu}(R)}(s(t'),r(t)) \, &\leq \, d_{\A_{\mu}(R)}(r(t), r(t'))+d(s(t'),r(t')) \\
    &\leq \, d_{\A_{\mu}(R)}(r(t),r(t'))+d_{\A_{\mu}(R)}(s,r).
\end{align*} Since $[0,1] \to \A_{\mu}(R), \, t \mapsto r(t)$ is continuous for every $r \in \A_{\mu}(R)$, we now conclude that $d_{\A_{\mu}(R)}(s(t'),r(t)) \rightarrow 0$ as $ (s,t') \rightarrow (r,t)$ in $\A_{\mu}(R) \times [0,1]$. So, $h$ is continuous. \end{proof}

\begin{question}\label{q: skew-lamplighter} Let $R$ be a countable Borel equivalence relation on a standard measure space $(X,\mu)$, and let $\mu$ be $R$-invariant and $R$-ergodic. Are the semidirect products $\A_{\mu}(R)\rtimes[R]$ and $\A_{\mu}(R)\rtimes G$ (for any countable dense subgroup $G \leq [R] $) always \emph{skew-amenable} in the sense of~\cite{VP2020,JS}?\footnote{According to Theorem~\ref{Top.Lamplighter}(3), the topological group $\A_{\mu}(R) \rtimes[R]$ does not have SIN, whence skew-amenability is not a consequence of amenability.} (In particular, it would be interesting to know whether $\A_{\mu}(R)\rtimes[R]$ and its versions with dense countable subgroups of $[R] $ can be \emph{proximally simulated}~\cite[Section 7]{JS} by suitable subgroups.) \end{question}
	
Next, we explain the connection between the (strong) Kesten property for measurable lamplighters and the behavior of the inverted orbits.

Let $R$ be a countable Borel equivalence relation on a standard measure space $(X,\mu)$ and let $\mu$ be $R$-quasi-invariant. Assume that $G$ is a countable subgroup of $[R]$ and let $\A_{\mu}(G)$ be the subgroup of $\A_{\mu}(R)$ generated by the graphs of the elements in $G$. Then $\A_{\mu}(G)$ is invariant under the action of $G$, and we can consider $\A_{\mu}(G) \rtimes G$ as a subgroup of $\A_{\mu}(R) \rtimes [R]$. In order to introduce some additional notation for the statement of our next theorem, let $\nu$ be a symmetric non-degenerate measure on $G$. Then the corresponding \textit{switch-walk-switch} measure $\hat{\nu}$ on $\A_{\mu}(G) \rtimes G$ is defined by \[\hat{\nu} \, := \, 1/2(\delta_{\emptyset}+\delta_{I})*\nu*1/2(\delta_{\emptyset}+\delta_{I}),\] where $\emptyset$ is viewed as the trivial element of $\A_{\mu}(R)$ and $I \subseteq R$ is the graph of the identity map. Finally, we denote by $$\hat{g}_n=(c_n,g_n)$$  the position at time $n$ of the left $\hat{\nu}$-random walk started at the identity.
   
\begin{theorem}\label{inv.orb.T} Let $R$ be an countable Borel equivalence relation on a standard measure space $(X,\mu)$ and let $\mu$ be $R$-quasi-invariant and $R$-ergodic. Let $d$ denote the $L^{1}$-distance on $\A_{\mu}(R)$. Consider a symmetric finitely supported probability measure $\nu$ on $[R]$. Let $G$ be the subgroup of $[R]$ generated by the support of $\nu$, and let $\hat{\nu}$ be the corresponding switch-walk-switch random walk on $\A_{\mu}(G) \rtimes G$. Finally, let $O_n(x)$ denote the inverted orbit of $x$ under the first $n$ steps of the $\nu$-random walk on $G$. Then the following statements hold. \begin{enumerate}
	\item[$(1)$]\label{strong} For every $\epsilon \in (0,1)$,we have\footnote{The expectation in the right-hand side is taken with respect to $\nu$-random walk on $G$, and the probability in the left-hand side is taken with respect to $\hat{\nu}$-random walk on the semidirect product.} \begin{displaymath}
            \qquad (1-\epsilon)\mathbb{P}(d(c_n, \emptyset) < \epsilon) \, \leq \, \int_X\mathbb{E}2^{-|O_n(x)|} \,\mathrm{d}\mu(x).
        \end{displaymath} In particular, if the strong Kesten property holds for $\A_{\mu}(G) \rtimes G$ or $\A_{\mu}(R) \rtimes[R]$, then  $\int_X\mathbb{E}2^{-|O_n(x)|} \,\mathrm{d}\mu(x)$ decays subexponentially.
    \item[$(2)$]\label{weak} For every $\epsilon \in (0,1)$, \begin{displaymath}
            \qquad \tfrac{1}{2}\mathbb{P}\!\left(d(c_n, \emptyset) < \tfrac{\epsilon}{2} n\right) -e^{-\epsilon n/2} \, \leq \, \int_X\mathbb{P}(|O_n(x)| \leq  4 \epsilon n) \,\mathrm{d}\mu(x).
        \end{displaymath} In particular, if the Kesten property holds for $\A_{\mu}(G) \rtimes G$ or $\A_{\mu}(R) \rtimes[R]$, then $\int_X\mathbb{P}(|O_n(x)| \leq  \epsilon n) \,\mathrm{d}\mu(x)$ decays subexponentially for each $\epsilon>0$.
\end{enumerate} \end{theorem}	

As the reader may notice, for amenable $R$, Theorem~\ref{inv.orb.T} directly relates the Kesten property with the behavior of inverted orbits of the random walks on equivalence classes of $R$. We will use this connection in Corollary~\ref{counterexample_K} later.
    
We will use the following lemma in the proof of Theorem~\ref{inv.orb.T}(2).
    
    \begin{lemma}\label{ProbLemma}
    	Let $X_n$ be a random variable taking values in $\{1,\ldots,n+1\}$, and let $Y_n$ be a random variable with distribution $\Bin(X_n,1/2)$.
    	Let $\epsilon>0$.  Then \begin{displaymath}
            \mathbb{P}(Y_n<\epsilon n) \, \leq \, \mathbb{P}(X_n< 4\epsilon n)+ e^{-\epsilon n/2} .
        \end{displaymath}
    \end{lemma}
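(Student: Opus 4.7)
The plan is to split on the value of $X_n$ and, on the event that $X_n$ is not too small, use a standard large-deviation bound for the binomial distribution. More precisely, condition on $X_n = k$. On the event $\{X_n < 4\epsilon n\}$ I will simply bound the conditional probability trivially by $1$, contributing the first term $\mathbb{P}(X_n < 4\epsilon n)$. On the complementary event $\{X_n \geq 4\epsilon n\}$ the conditional mean of $Y_n$ is $k/2 \geq 2\epsilon n$, so the event $\{Y_n < \epsilon n\}$ demands a deviation below the mean by at least $k/2 - \epsilon n \geq \epsilon n$, a regime in which Hoeffding's inequality gives exponential decay.

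Concretely, Hoeffding's inequality for $\Bin(k,1/2)$ yields
\begin{equation*}
\mathbb{P}(Y_n < \epsilon n \mid X_n = k) \, \leq \, \exp\!\left(-\frac{2(k/2 - \epsilon n)^2}{k}\right) \qquad (k \geq 4\epsilon n).
\end{equation*}
Setting $u = k/(\epsilon n)$, the exponent equals $-\tfrac{\epsilon n}{2}\cdot\tfrac{(u-2)^2}{u}$, and the elementary inequality $(u-2)^2 \geq u$ for $u \geq 4$ (equivalent to $(u-1)(u-4) \geq 0$) shows that the right-hand side is at most $e^{-\epsilon n/2}$, uniformly in $k \geq 4\epsilon n$.

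Combining the two cases by the total probability formula,
\begin{equation*}
\mathbb{P}(Y_n < \epsilon n) \, \leq \, \mathbb{P}(X_n < 4\epsilon n) \, + \, \sup_{k \geq 4\epsilon n}\mathbb{P}(Y_n < \epsilon n \mid X_n = k) \, \leq \, \mathbb{P}(X_n < 4\epsilon n) + e^{-\epsilon n/2},
\end{equation*}
which is the required estimate. There is no genuine obstacle here: the only point that needs a moment of care is verifying that the chosen threshold $4\epsilon n$ (rather than $2\epsilon n$ or $3\epsilon n$) is large enough for the Hoeffding exponent to dominate $\epsilon n/2$, and this is exactly what the algebraic check $(u-1)(u-4) \geq 0$ confirms. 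One could replace Hoeffding by a Chernoff bound with essentially identical constants, but the statement of the lemma is calibrated to the Hoeffding version.
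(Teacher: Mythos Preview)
Your proof is correct, and it is in fact more direct than the paper's. You condition on $X_n=k$, bound trivially on $\{k<4\epsilon n\}$, and on $\{k\ge 4\epsilon n\}$ apply Hoeffding uniformly in $k$, verifying via $(u-1)(u-4)\ge 0$ that the exponent dominates $\epsilon n/2$ for all such $k$. The paper instead performs an Abel summation: it rewrites $\sum_k \mathbb{P}(X_n=k)\,\mathbb{P}(\Bin(k,1/2)<\epsilon n)$ as a sum against the differences $\mathbb{P}(\Bin(k,1/2)<\epsilon n)-\mathbb{P}(\Bin(k+1,1/2)<\epsilon n)$, splits at $k=[4\epsilon n]$, and telescopes each part, reducing to a \emph{single} Hoeffding bound at the threshold value $k=[4\epsilon n]+1$. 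The two arguments reach the same constants; yours avoids the summation-by-parts machinery at the cost of the small extra algebraic check that the Hoeffding exponent is minimized at $u=4$, while the paper's version trades that check for a longer computation. Either way the substance is identical: the threshold $4\epsilon n$ is exactly what makes the Hoeffding exponent equal $\epsilon n/2$.
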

    \begin{proof}
    	We may assume that $\epsilon<1/4$ and $\epsilon n >1$.  By definition of $Y_n$ we have 
    	\begin{align*}
    		&\mathbb{P}(Y_n<\epsilon n) \, = \, \sum\nolimits_{k=1}^{n+1}\mathbb{P}(\Bin(k,1/2)<\epsilon n) \mathbb{P}(X_n=k) \\
    		& \qquad = \, \sum\nolimits_{k=1}^{n+1}\mathbb{P}(X_n \leq k)(\mathbb{P}(\Bin(k,1/2) < \epsilon n)-\mathbb{P}(\Bin(k+1,1/2)<\epsilon n)) \\ & \hspace{60mm} +  \mathbb{P}(\Bin(n+2,1/2)<\epsilon n). 
    	\end{align*}
    	We can split the sum in the second line into two parts at $k=\lfloor 4\epsilon n\rfloor+1$, and we may bound the first part as
    	\begin{align*}
    		&\sum\nolimits_{k=1}^{\lfloor4\epsilon n\rfloor}\mathbb{P}(X_n \leq k)(\mathbb{P}(\Bin(k,1/2) < \epsilon n)-\mathbb{P}(\Bin(k+1,1/2)<\epsilon n)) \\
    		& \quad \leq \, \sum\nolimits_{k=1}^{\lfloor4\epsilon n \rfloor }\mathbb{P}(X_n \leq [4\epsilon n])(\mathbb{P}(\Bin(k,1/2) < \epsilon n)-\mathbb{P}(\Bin(k+1,1/2)<\epsilon n)) \\ 
    		& \quad = \, (1-\mathbb{P}(\Bin(\lfloor4\epsilon n\rfloor+1,1/2)<\epsilon n) \mathbb{P}(X_n \leq \lfloor4\epsilon n\rfloor)  .      	
    	\end{align*}
    	Notice that $\mathbb{P}(\Bin(k,1/2) < \epsilon n)-\mathbb{P}(\Bin(k+1,1/2)<\epsilon n) $ is always non-negative. So we can bound the second part as
    	\begin{align*}
    		&\sum\nolimits_{k=\lfloor4\epsilon n\rfloor+1}^{n+1}\mathbb{P}(X_n \leq k)(\mathbb{P}(\Bin(k,1/2) < \epsilon n)-\mathbb{P}(\Bin(k+1,1/2)<\epsilon n)) \\
            & \qquad \leq \, \sum\nolimits_{k=\lfloor4\epsilon n\rfloor+1}^{n+1} (\mathbb{P}(\Bin(k,1/2) < \epsilon n)-\mathbb{P}(\Bin(k+1,1/2)<\epsilon n)) \\
            & \qquad = \, \mathbb{P}(\Bin(\lfloor4\epsilon n\rfloor+1,1/2)<\epsilon n) -\mathbb{P}(\Bin(n+2,1/2)<\epsilon n) .
    	\end{align*}
    	Combining these bounds we obtain the following inequality
    	\begin{align*}
    		&\sum\nolimits_{k=1}^{n+1}\mathbb{P}(X_n \leq k)(\mathbb{P}(\Bin(k,1/2) < \epsilon n)-\mathbb{P}(\Bin(k+1,1/2)<\epsilon n)) \\
            & \hspace{60mm} + \mathbb{P}(\Bin(n+2,1/2)<\epsilon n) \\ 
            & \qquad \leq \, (1-\mathbb{P}(\Bin( \lfloor4\epsilon n \rfloor+1,1/2)<\epsilon n)) \mathbb{P}(X_n \leq \lfloor 4\epsilon n \rfloor) \\
            & \hspace{60mm} + \mathbb{P}(\Bin(\lfloor 4\epsilon n \rfloor +1,1/2)<\epsilon n)\\
            & \qquad \leq \, \mathbb{P}(X_n \leq \lfloor 4\epsilon n \rfloor) +\mathbb{P}(\Bin(\lfloor4\epsilon n \rfloor+1,1/2)<\epsilon n) .
    	\end{align*}
    	Now, by Hoeffding's inequality,
    	$$\mathbb{P}(\Bin(\lfloor 4\epsilon n \rfloor+1,1/2)<\epsilon n) \, \leq \,  e^{-2\frac{1}{16} 4\epsilon n} \, = \,  e^{-\epsilon n/2},$$
    	and the conclusion follows.
    \end{proof}
    
    Now we are ready to prove the theorem.

	\begin{proof}[Proof of Theorem~\ref{inv.orb.T}] 

We will use the following notation throughout the proof. For a fixed sequence of increments $\hat{h}=(\hat{h}_1,\ldots,\hat{h}_n)  \in  (\A_{\mu}(R) \rtimes [R])^n $ we will denote by $c_{\hat{h}}$  the  $\A_{\mu}(R)$-component of $\hat{g}_n=\hat{h}_n\cdots\hat{h}_1$.
\vspace{0.5em}

(1)	It is well-known (see, for example,  the proof of \cite[Proposition 5.27]{J}) that for almost every $x \in X,$ 
		$$\mathbb{P}(c_n(x)=\emptyset) \, = \, \mathbb{E} 2^{-|O_n(x)|}.$$
    
		Therefore, by Fubini's theorem, \begin{displaymath}
    \int_X \mathbb{E} 2^{-|O_n(x)|} \,\mathrm{d}\mu(x) \, = \, (\mu \otimes \hat{\nu}^{ \otimes n})(\{(x, \hat{h}) \in X \times (\A_{\mu}(G) \rtimes G)^n \mid c_{\hat{h}}(x)=\emptyset \}) .
    \end{displaymath}
		In particular,
		\begin{align*}
			&\int_X \mathbb{E} 2^{-|O_n(x)|} \,\mathrm{d}\mu(x) \\
            & \qquad \geq \, (\mu \otimes \hat{\nu}^{ \otimes n})(\{(x, \hat{h}) \in X \times (\A_{\mu}(G) \rtimes G)^n \mid c_{\hat{h}}(x)=\emptyset, d(\emptyset, c_{\hat{h}})<\epsilon \}) .
		\end{align*}
		However, since $1-d(\emptyset, c_{\hat{h}})\leq \mu (\{x \in X \mid c_{\hat{h}}(x)=\emptyset\})$,  Fubini's theorem implies that the right-hand side is greater than or equal to $(1-\epsilon)\mathbb{P}(d(c_n, \emptyset)<\epsilon)$.

		If the strong Kesten property holds for $\A_{\mu}(R)\rtimes[R]$ (or $\A_{\mu}(G) \rtimes G$), then we can apply it to $U=B_d(\epsilon) \times [R]$ (or $B_d(\epsilon) \times G$), and the random walk defined by $\hat{\nu}$. As a result, the above inequality implies that   $\int_X\mathbb{E}2^{-|O_n(x)|} \,\mathrm{d}\mu(x)$ decays subexponentially.

		(2)	Notice that for each $x$, the random variable $|c_n(x)|$ has binomial distribution $\Bin(|O_n(x)|,1/2) $. 
		
		For each $x\in X$, applying Lemma~\ref{ProbLemma} to $|O_n(x)|$ and $|c_n(x)|$, we see that \begin{displaymath}
		\int_X\mathbb{P}(|O_n(x)| \leq 4 \epsilon n) \,\mathrm{d}\mu(x) \, \geq \, \int_X\mathbb{P}(|c_n(x)| \leq  \epsilon n) \,\mathrm{d}\mu(x) -e^{-\epsilon n/2}. \end{displaymath} 
		By Fubini's theorem,
		\begin{align*}
		&\int_X\mathbb{P}(|c_n(x)| \leq  \epsilon n) \,\mathrm{d}\mu(x) \\
        & \quad = \, (\mu \otimes \hat{\nu}^{ \otimes n})(\{(x, \hat{h}) \in X \times (\A_{\mu}(G) \rtimes G)^n \mid |c_{\hat{h}}(x)| \leq  \epsilon n\}) \\
        & \quad \geq \, (\mu \otimes \hat{\nu}^{ \otimes n})(\{(x, \hat{h}) \in X \times (\A_{\mu}(G) \rtimes G)^n \mid |c_{\hat{h}}(x)| \leq \epsilon n, \, d(c_{\hat{h}}, \emptyset )\leq \tfrac{\epsilon}{2}n\} ) \\
        & \quad = \, \sum\nolimits_{\hat{h}:d(c_{\hat{h}}, \emptyset)\leq \frac{\epsilon}{2}n} \hat{\nu}^{ \otimes n}(\hat{h}) \mu(\{x \in X \mid |c_{\hat{h}}(x)| \leq  \epsilon n\}) \\
        & \quad \geq \, \tfrac{1}{2} \sum\nolimits_{\hat{h}:d(c_{\hat{h}}, \emptyset)\leq  \frac{\epsilon}{2}n} \hat{\nu}^{ \otimes n}(\hat{h})  \\
        & \quad = \, \tfrac{1}{2}\mathbb{P}\!\left(d(c_n, \emptyset) < \tfrac{\epsilon}{2} n\right) .
		\end{align*} 
     The last inequality follows from Markov's inequality applied to the map $x \mapsto |c_{\hat{h}}(x)|$ and the assumption \[ \int_X |c_{\hat{h}}(x)| d \mu(x)= d(c_{\hat{h}}, \emptyset)\leq  \frac{\epsilon}{2}n. \] 
		Thus, if the Kesten property is satisfied in this setting, we can set
        \begin{displaymath}
    U \, = \, B_d\!\left(\tfrac{\epsilon}{2}\right) \times V ,
\end{displaymath} where $V$ is some neighborhood of the identity in $G$, and \begin{displaymath}
    B_d\!\left(\tfrac{\epsilon}{2}\right) \, = \, \{c \in C(G) \mid d(c, \emptyset) < \tfrac{\epsilon}{2} \} ,
\end{displaymath} and deduce that the probability $\mathbb{P}(d(c_n, \emptyset) < \frac{\epsilon}{2} n) $ decays subexponentially, so \begin{displaymath}
    \tfrac{1}{2}\mathbb{P}\!\left(d(c_n, \emptyset) < \tfrac{\epsilon}{2} n\right) -e^{-\epsilon n/2}
\end{displaymath} decays subexponentially as well. \end{proof}

\begin{corollary}\label{counterexample_K}
Let  $(X, \mu)$  be the Poisson boundary of the simple random walk\footnote{The simple random walk on $F_2=F_2(a,b)$ is the one assigning the weight $1/4$ to $a,b, a^{-1}, b^{-1}$.} on the free group $F_2$ and let $R$ be the equivalence relation generated by the natural action of $F_2$ on $(X, \mu)$. Then the measurable lamplighter $\A_{\mu}(R) \rtimes [R]$ is a contractible, hence locally generated, amenable Polish topological group without the Kesten property.

\end{corollary}

 \begin{proof}
Consider the natural embedding of $F_2$ into $[R]$, and let $\nu$ be the measure corresponding to the 
simple random walk on $F_2$.
Since the action of $F_2$ on $(X, \mu)$ is free almost surely, the quantity $\mathbb{P}(|O_n(x)| \leq  \epsilon n)$ is almost surely equal to the probability that the inverted orbit of the identity in $F_2$ under the first $n$ steps of the simple random on $F_2$ driven by $\nu$ is at most $\epsilon n$. Since $F_2$ is not amenable, the left multiplication action of $F_2$ on itself is not extensively amenable, so the latter quantity has exponential decay as $n \rightarrow \infty$ by Proposition~\ref{inv_orb_char}(2). Therefore, the integrals
$\int_{X}\mathbb{P}(|O_n(x)| \leq  \epsilon n) \,\mathrm{d}\mu(x)$ decay exponentially as $ n \rightarrow \infty$, and as a result, Theorem~\ref{inv.orb.T} implies that the lamplighter $\A_{\mu}(R) \rtimes[R]$ cannot satisfy the Kesten property. Amenability of $R$ with respect to $\mu$ is proved in \cite{Zimmer}, so $\A_{\mu}(R) \rtimes[R]$ is amenable by Theorem \ref{Top.Lamplighter}(2). Finally, as $\mu$ is $R$-ergodic, $\A_{\mu}(R) \rtimes[R]$ is contractible by Theorem~\ref{Top.Lamplighter}(3), and in particular, it is path-connected, thus locally generated. \end{proof}

The reader might notice that the argument in Corollary \ref{counterexample_K} heavily relies on non-amenability of the actions of $F_2$ on the orbits of points in $X$, but this is possible only because we do not require the measure $\mu$ to be invariant. As we already mentioned in Section~\ref{section:extensive.amenability}, when $R$ is $\mu$-amenable and the measure $\mu$ is assumed to be $R$-invariant, the actions of $G \leq [R]$ on the orbits of $R$ tend to behave similarly to the actions of $G$ on its quotients modulo normal co-amenable subgroups, and one may expect the measurable lamplighter to satisfy the Kesten property in the measure-preserving case.

Let us now describe potential ways to connect the subexponential decay of the integrals $\int_X\mathbb{E}2^{-|O_n(x)|}\,\mathrm{d}\mu(x)$ or $\int_X\mathbb{P}(|O_n(x)| \leq \epsilon n) \,\mathrm{d}\mu(x)$ with the extensive amenability of actions on orbits.  It is well-known that for a point $y$ in a $G$-set $Y$, $\mathbb{E}2^{-|O_n(y)|} $ is equal to the return probability of a random walk on $\mathcal{P}_f(Y)$ induced by a switch-walk-switch random walk on $\mathcal{P}_f(Y)\rtimes G$ with the ``switching'' component corresponding to $1/2(\delta_{\emptyset}+\delta_{y})$. Hence, the limit $$\rho_y \, = \, \lim\nolimits_{n \rightarrow \infty}\left(\mathbb{E}2^{-|O_n(y)|}\right)^{1/n}$$ exists for every $y \in Y$, and moreover, it is equal to the spectral radius of the switch-walk-switch random walk (corresponding to $y$) on $\mathcal{P}_f(Y)$.

Moreover, as we explained in Remark \ref{remark:limit_for_inv_orbits}, for any action of a group $G$ on a set $Y$, any $ \epsilon >0$, any $y \in Y$, and any symmetric probability measure $\mu$ on $G$, the limit $$p_{\epsilon}(y) \, = \, \lim\nolimits_{n \rightarrow \infty } {\mathbb{P}  \left(|O_n(y)| \leq  \epsilon n\right)}^{1/n}$$ exists as well. The following proposition relates independence of $p_{\epsilon}(y)$ and $\rho_y$ from the choice of $y$ in the $G$-orbit with the rate of decay of the corresponding integrals. 

\begin{proposition}
 Let $G$ be a countable group acting on a standard measure space $(X, \mu)$ in a Borel way such that $\mu$ is quasi-invariant and ergodic.  Consider any symmetric random walk on $G$ and let us retain the notation for inverted orbits and limits defined above.
 Then the following hold.
\begin{enumerate}
    \item Assume that for some $\epsilon >0 $, for almost every $x \in X$ the value of $p_{\epsilon}(x)$  is constant on the $G$-orbit of $x$. Then if the integral $ \int_X\mathbb{P}(|O_n(x)| \leq \epsilon n) \,\mathrm{d}\mu(x) $ decays subexponentially, we have $p_{\epsilon}(x)=1$ almost surely on $X$.
    \item Assume that $\rho_y$ is constant on the $G$-orbit of $y$ for a.e. $y \in X$, and the integral $ \int_X\mathbb{E}2^{-|O_n(x)|}\,\mathrm{d}\mu(x)$ decays subexponentially. Then $\rho_y =1$ almost surely on~$X$.
   
\end{enumerate}
  
\end{proposition}

\begin{proof}
We prove~(1), the proof of~(2) is similar. Since $p_{\epsilon}(x)$ is a measurable function on $X$ which is constant on the orbits of $G$, ergodicity of the action of $G$ on $(X, \mu)$ implies that $p_{\epsilon}(x)$ is constant almost everywhere on $X$. Let us denote its value by $p_{\epsilon}$.
Recall that in Remark \ref{remark:limit_for_inv_orbits} we showed that the function $n \mapsto {\mathbb{P} \left(|O_n(x)| \leq  \epsilon n\right)}$ is supermultiplicative with respect to $n$  and that ${\mathbb{P} \left(|O_n(x)| \leq  \epsilon n\right)} \leq p_{\epsilon}(x)^n =p_{\epsilon}^n$ for almost every $x$. Hence, if $p_{\epsilon} < 1$, then we have $ \int_X\mathbb{P}(|O_n(x)| \leq \epsilon n) \,\mathrm{d}\mu(x)  \leq p_{\epsilon}^n$ and the integral decays exponentially, which contradicts the assumption.  
\end{proof}

As a corollary, a positive answer to the following question means that extensive amenability of action on almost every orbit may be deduced from Theorem~\ref{inv.orb.T}, provided that the relevant version of the Kesten property holds in this setting. 

	\begin{question} \label{Q: inv_orbit}
		Let $G$ be a countable group acting transitively on a set $Y$ and consider any symmetric random walk on $G$. 
        \begin{enumerate}
            \item Is any of the quantities $\rho_y$ and  $p_{\epsilon}(y)$ defined above independent of the choice of $y \in Y$?
            \item Is any of these limits independent of a choice of $y$ under the assumption that $G$ is finitely generated,  the random walk is driven by a measure with finite support, and the orbital Schreier graph is amenable or, even stronger, has polynomial uniform growth?
        \end{enumerate}  
	\end{question}
We also note that the connection between $\rho_y$ and the spectral radius of the switch-walk-switch random walk may make this question easier to answer for $\rho_y$.

\begin{remark}
Assume that a finitely generated amenable group $G$ acts freely on a standard measure space $(X,\mu)$ in a Borel way so that $\mu$ is invariant and ergodic, and let $R$ be the associated orbit equivalence relation. Then the lamplighter construction still produces a Polish group if the full group $[R]$ is replaced with the $L^1$ full group of the action of $G$, see \cite{LeMaitre18} and \cite{LeMaitre21} for a detailed study of the $L^1$ full groups. The $L^1$ full groups are equipped with natural metric turning them into extremely amenable Polish groups, and they enjoy a variety of rigidity properties. In particular, any abstract isomorphism of the $L^1$ full groups of two ergodic actions turns out to be a quasi-isometry of respective metrics, see~\cite[Theorem~C]{LeMaitre21}. The group $\IET(\Lambda)$ embeds into the $L^1$ full group of the corresponding action of a free abelian group. Moreover, an analog of Theorem \ref{inv.orb.T} is also valid in this setting. Therefore, establishing Kesten property for lamplighters over the $L^1$ full groups can also become a step towards solving amenability problem for the $\IET$ group.
\end{remark}

\section{Appendix}\label{lcsc Kesten}

In this appendix we provide the proof of ~\cite[Corollary~7.3]{Q}, restricted to the measures $\mu$ with $1 \in \supp{\mu}$, for the reader's convenience. The additional assumption is technical and allows us to use \cite[Theorem 1]{BeCh-74a} directly. Furthermore, we also retain the notation from \cite{Q}, and in particular, for subset $S \subseteq G$, we denote by $1_S( \cdot)$ the indicator function of $S$. We do not claim any original results in this section.

The corollary may be stated as follows.

\begin{proposition}
Let $G$ be a compactly generated locally compact group and $\mu$ be a symmetric regular Borel probability measure on $G$ such that $1 \in \supp{\mu}$. Suppose that the support of $\mu$ generates a non-amenable subgroup of $G$. Let $K$ be a symmetric compact generating set of $G$. Then there exist $\alpha, \epsilon >0$ such that 
$\mu^n(K^{\lfloor\epsilon n\rfloor})$ decays faster than $e^{-\alpha n}$.
\end{proposition}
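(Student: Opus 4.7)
The strategy is to bound $\mu^n(K^{[\epsilon n]})$ by reducing to a Kesten-type spectral estimate for a symmetric random walk on a discrete non-amenable group and then combining it with the exponential volume growth of $K^m$ in $G$. Since every $\mu^n$ is supported on $\Gamma := \langle \supp(\mu)\rangle$, the analysis takes place inside $\Gamma$. A naive symmetrization $\tfrac12(\mu+\check\mu)$ is symmetric but loses an exponential factor of $2^n$ when comparing iterated convolutions, so I would instead work with $\eta := \mu^N * \check\mu^N$ for an integer $N$ chosen so that $\supp(\eta) = \supp(\mu)^N \cdot \supp(\mu)^{-N}$ generates a non-amenable subgroup $\Gamma_N'$ of $\Gamma$. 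Such an $N$ exists: the subgroups $\Gamma_N'$ are increasing in $N$, and their union has cyclic quotient in $\Gamma$ (since all elements of $\supp(\mu)$ become congruent modulo it), so by the extension property of amenability the union is non-amenable, and by stability under directed unions some $\Gamma_N'$ is itself non-amenable.

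On the countable discrete non-amenable group $\Gamma_N'$, Kesten's theorem yields that the spectral radius $\rho$ of the symmetric measure $\eta$ on $\ell^2(\Gamma_N')$ satisfies $\rho<1$, so $\eta^k(\gamma)\leq \eta^k(e)\leq \rho^k$ for every $\gamma$ and $k$. On the locally compact side, compact generation gives $\lambda(K^m)\leq C_1 e^{\gamma m}$ for left Haar measure $\lambda$, and a pigeonhole argument against a small open neighborhood of the identity (with a smoothing convolution to handle a possible non-discrete embedding of $\Gamma_N'$ in $G$) produces the combinatorial bound $|K^m\cap \Gamma_N'|\leq C_2 e^{\gamma m}$. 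Summing the uniform spectral bound then gives
$$
\eta^k(K^m) \,\leq\, C_2\, \rho^k\, e^{\gamma m},
$$
which is exponentially small whenever $m \leq \epsilon' k$ for $\epsilon' < \gamma^{-1}\log(1/\rho)$.

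The final step transfers the exponential decay from $\eta$ back to $\mu$. The opening inequality is the two-sided Cauchy--Schwarz bound $\mu^n(K^m)^2 \leq (\mu^n * \check\mu^n)(K^{2m})$, which is valid because $X,Y\in K^m$ forces $XY^{-1}\in K^{2m}$ for independent $\mu^n$-distributed $X,Y$. For $n = Nk$ the two-sided measure $\mu^{Nk}*\check\mu^{Nk}$ does not coincide with $\eta^k=(\mu^N*\check\mu^N)^k$, but an inductive rearrangement via repeated Cauchy--Schwarz bounds $(\mu^{Nk}*\check\mu^{Nk})(K^{2m})$ in terms of $\eta^k(K^{2m+O(1)})$ up to a subexponential factor. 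Combining the estimates with $m=[\epsilon n]$ for sufficiently small $\epsilon>0$, and invoking the supermultiplicativity of $n\mapsto \mu^n(K^{[\epsilon n]})$ (via Fekete's lemma, as used elsewhere in the paper) to handle $n$ not divisible by $N$, yields $\mu^n(K^{[\epsilon n]})\leq e^{-\alpha n}$ for some $\alpha>0$ and all large $n$.

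The main obstacle is the last comparison: one must ensure that the combinatorial/rearrangement factor relating $\mu^{Nk}*\check\mu^{Nk}$ to $\eta^k$ is subexponential so it does not destroy the $\rho^k$ gain from Kesten's theorem. The potential non-discreteness of $\Gamma_N'$ inside $G$ is a subsidiary technical point, resolved by the smoothing convolution used in the volume estimate.
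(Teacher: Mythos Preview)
Your proposal has a genuine gap, and you have correctly identified it yourself: the ``inductive rearrangement via repeated Cauchy--Schwarz'' that is supposed to compare $\mu^{Nk}*\check\mu^{Nk}$ with $\eta^k=(\mu^N*\check\mu^N)^k$ on the sets $K^{2m}$ is not carried out, and there is no standard lemma that does this with only a subexponential loss. The two operators $P_\mu^{Nk}(P_\mu^*)^{Nk}$ and $(P_\mu^N(P_\mu^*)^N)^k$ are genuinely different, and while both have operator norm at most $\|P_\mu\|^{2Nk}$, that observation already presupposes $\|P_\mu\|<1$ for the \emph{non}-symmetric $\mu$ --- which is exactly what your symmetrization was meant to avoid. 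So the detour through $\eta$ buys nothing unless you can close this step, and you have not. A secondary problem is the counting bound $|K^m\cap\Gamma_N'|\leq C_2e^{\gamma m}$: if $\Gamma_N'$ is not discrete in $G$ this intersection is typically infinite, and your ``smoothing convolution'' fix is only gestured at.

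The paper's proof bypasses all of this by working directly on $L^2(G,\lambda)$ and invoking the Berg--Christensen form of Kesten's theorem for locally compact groups: if the closed subgroup generated by $\supp(\mu)$ is non-amenable then $\|P_\mu\|_{L^2(G)}<1$, with no symmetry assumption on $\mu$. One then uses the elementary pointwise inequality $1_L(g)1_K(x)\leq 1_{LK}(gx)1_K(x)$, integrates against $\mu^n\otimes\lambda$, and recognizes the right-hand side as $\langle P_\mu^n 1_{LK},1_K\rangle_{L^2(G)}$. Cauchy--Schwarz and $\|P_\mu^n\|\leq e^{-\alpha n}$ give $\mu^n(L)\leq e^{-\alpha n}\lambda(LK)^{1/2}\lambda(K)^{-1/2}$; taking $L=K^{[\epsilon n]}$ and choosing $\epsilon$ small enough that the volume growth of $K^m$ (which is at most exponential since $\lambda(K^m)^{1/m}$ converges) is beaten by $e^{-\alpha n}$ finishes the argument. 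Note that this integral inequality is precisely the ``smoothing by $1_K$'' that you allude to, but here it is the entire proof rather than a technical patch, and it makes the discrete reduction, the symmetrization, the point-counting, and the rearrangement step all unnecessary.
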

The proof uses Kesten's theorem
for locally compact groups, see, for example, \cite[Theorem 1]{BeCh-74a}. More precisely, it uses the fact that if the closed subgroup generated by the support of a Borel probability measure $\mu$ on $G$ is non-amenable, then the spectral radius of the Markov operator $P_{\mu}$ on $L^2(G)$ is strictly less than 1. Another important observation is the well-known fact that for the left-invariant Haar measure $\lambda$ on $G$  the sequence $\lambda(K^n)^{\frac{1}{n}}, \; n\geq 1, $ converges.

\begin{proof}
We may assume that $K$ has non-empty interior. Let $L$ be any compact subset of $G$. Then, for any $g,x \in G$ we have 
$$ 1_{L} (g) 1_{K}(x) \leq 1_{LK}(gx)1_K(x) .$$
Thus, for any $n$, integrating this inequality with respect to $\mu^n \times \lambda $ and using Fubini's theorem we obtain
\begin{displaymath}
	\mu^n(L)\lambda(K) \leq \int_{G} \bigg( \int_G 1_{LK}(gx)\,\mathrm{d}\mu^n(g)\bigg) 1_K(x) d\lambda(x) .
\end{displaymath} 
However, \begin{displaymath}
    \int_{G} \bigg( \int_G 1_{LK}(gx)\,\mathrm{d}\mu^n(g)\bigg) 1_K(x) \,\mathrm{d}\lambda(x)= \langle P_{\mu}^n 1_{LK}, 1_K\rangle_{L^2(G)}.
\end{displaymath} Since the closed subgroup generated by $\supp{\mu}$ is non-amenable, \cite[Theorem 1]{BeCh-74a} implies that $ \|P_{\mu} \| < 1$. Therefore, there exists $\alpha>0$ such that $ \|P_{\mu}^n\| < e^{-\alpha n}$  for all $n$, and so \begin{displaymath}
    \langle P_{\mu}^n 1_{LK}, 1_K\rangle_{L^2(G)} < e^{-\alpha n} \|1_{LK} \|_2 \|1_K \|_2=e^{-\alpha n}\lambda(LK) \lambda(K).
\end{displaymath} Since for every compact $C$ the sequence $\lambda(C^n)^{\frac{1}{n}}, \; n\geq 1, $ converges,  one may choose a sufficiently small $\epsilon$ such that for $L=K^{\lfloor \epsilon n \rfloor} $ the sequence  $\lambda(K^{\lfloor \epsilon n\rfloor+1}) $ grows slower than $e^{\alpha n/2 }$. Then we have  $$	\mu^n(K^{\lfloor\epsilon n\rfloor}) \leq  \langle P_{\mu}^n 1_{K^{\lfloor \epsilon n\rfloor+1}}, 1_K\rangle_{L^2(G)} / \lambda(K)< e^{-\alpha n /2} .$$ 
Thus, the desired inequality follows.
\end{proof}

\end{document}